\newcommand{\arxiv}[1]{\href{http://arxiv.org/#1}{arXiv:#1}}
\newcommand*{\mailto}[1]{\href{mailto:#1}{\nolinkurl{#1}}}
\newtheorem{theorem}{Theorem}[section]
\newtheorem{lemma}[theorem]{Lemma}
\numberwithin{equation}{section}
\newtheorem{re}{Remark}[section]
\newtheorem{prop}{Proposition}[section]
\newtheorem{theo}{Theorem}[section]
\newtheorem{lem}{Lemma}[section]
\newtheorem{col}{Corollary}[section]
\newcommand{\be}{\begin{equation}}
\newcommand{\ee}{\end{equation}}
\newcommand\bes{\begin{eqnarray}} \newcommand\ees{\end{eqnarray}}
\newcommand{\bess}{\begin{eqnarray*}}
\newcommand{\eess}{\end{eqnarray*}}
\newcommand{\D}{\displaystyle}
\newcommand{\bu}{{\bf u}}
\newcommand{\bH}{{\bf H}}
\newcommand{\bF}{{\bf F}}
\newcommand{\bI}{{\bf I}}
\newcommand{\equ}{\overset{\rm def}{=}}
\def\XXint#1#2#3{{\setbox0=\hbox{$#1{#2#3}{\int}$}
     \vcenter{\hbox{$#2#3$}}\kern-.5\wd0}}
\numberwithin{equation}{section}
\begin{document}

\title[Large time behavior  for  compressible viscoelastic flows]{Large time behavior in critical $L^p$ Besov spaces for compressible viscoelastic flows}

\author[Q. Bie]{Qunyi Bie}
\address[Q. Bie]{College of Science $\&$  Three Gorges Mathematical Research Center, China Three Gorges University, Yichang 443002, PR China}
\email{\mailto{bieqy@mail2.sysu.edu.cn}}

\author[H. Fang]{Hui Fang}
\address[H. Fang]{College of Science, China Three Gorges University, Yichang 443002, PR China}
\email{\mailto{1624196902@qq.com}}
\author[Q. Wang]{Qiru Wang}
\address[Q. Wang]{School of Mathematics, Sun Yat-Sen University, Guangzhou 510275, PR China}
\email{\mailto{mcswqr@mail.sysu.edu.cn}}
\author[Z.-A. Yao]{Zheng-an Yao}
\address[Z.-A. Yao]{School of Mathematics, Sun Yat-Sen University, Guangzhou 510275, PR China}
\email{\mailto{mcsyao@mail.sysu.edu.cn}}

\keywords{Large time behavior; compressible; viscoelastic flows; $L^p$ critical spaces}
\subjclass[2010]{35B40, 35L60, 35Q35, 76A10}

\begin{abstract}
We consider the large time behavior of global strong solutions to the compressible viscoelastic flows on the whole space $\mathbb{R}^N\,(N\geq 2)$, where the system describes the elastic properties of the compressible fluid.  Adding a suitable initial condition involving only the low-frequency, we prove optimal time decay estimates for the global solutions in the $L^p$ critical regularity framework, which are similar to those of the
compressible Navier-Stokes equations. Our results rely on the pure energy argument, which allows us
to remove the usual smallness assumption of the data in the low-frequency.
\end{abstract}

\maketitle

\section{Introduction and main results} \label{s:1}
The  compressible  viscoelastic flows in $ \mathbb{R}_+\times\mathbb{R}^N\ $ reads as
 \begin{equation}\label{1.1}
 \left\{\begin{array}{ll}\medskip\displaystyle\partial_t \rho+{\rm div}(\rho {\bf u})
=0,\\
 \medskip\displaystyle\partial_t(\rho{\bf u})+{\rm div}(\rho{\bf u}\otimes{\bf u})+\nabla P(\rho)={\rm div}(2\mu D(\bu)+\lambda {\rm div}\bu\, {\rm \bI})+\alpha{\rm div}(\rho \bF\bF^T ),\\
 \medskip\displaystyle\partial_t\bF+\bu\cdot\nabla\bF=\nabla\bu\bF,\\
 \D (\rho,\bF, \bu)(0)=(\rho_0, \bF_0, \bu_0),
 \end{array}
 \right.
 \end{equation}
where $\rho=\rho(t,x)\in\mathbb{R}_+$ and $\bu=\bu(t,x)\in \mathbb{R}^N (N\geq 2)$ represent the density and velocity field, respectively, and $\bF\in\mathbb{R}^{N\times N}$ is the deformation gradient. Here $\bF^T$ means the transpose matrix of $\bF$, and $\bI$  is the unit matrix. The pressure $P$ depends only on the density and the function will be taken suitably smooth. The notation $D(\bu)\equ\frac{1}{2}(D_x\bu + (D_x\bu)^T)$ stands
for the deformation tensor. The Lam\'{e} coefficients $\lambda$ and $\mu$ (the bulk and shear viscosities) are density-dependent functions, which are supposed to be smooth functions of density and to satisfy $\mu>0$ and $\lambda+2\mu>0$. Such a condition ensures ellipticity for the operator ${\rm div}(2\mu D(\bu)+\lambda {\rm div}\bu\, {\rm Id})$ and is satisfied in the physical cases. Let us mention that we focus on solutions $(\rho, {\bf F}, \bu)$ that are close to some constant state $(1, {\bf I}, {\bf 0})$, at spatial infinity.

The main purpose of this paper is to investigate the time decay rates of strong solutions to system \eqref{1.1} in the critical $L^p$ framework.  Let us note that system \eqref{1.1} is scaling invariant under the transformation: for any constant $\kappa>0$,
\be\nonumber
\tilde{\rho}=\rho(\kappa^2t, \kappa x),~~\tilde{\bf F}={\bf F}(\kappa^2t, \kappa x),~~\tilde{\bf u}=\kappa{\bf u}(\kappa^2t, \kappa x)
\ee
 up to changes of the pressure  $\tilde{P}=\kappa^2P$. Here a functional space is called a critical space if the associated  norm is invariant under the scaling
$$
(\tilde{e},\tilde{\bf f}, \tilde{\bf g})(x)=(e(\kappa x), {\bf f}(\kappa x), \kappa{\bf g}(\kappa x)).
$$

 Let us first recall some local and global existence results for the compressible viscoelastic flows. Lei and Zhou \cite{lei2005global} proved the global existence of classical solutions for the 2D model by the incompressible limit. Hu and Wang \cite{hu2010local} obtained the local existence of strong solutions. Hu and Wu \cite{hu2013globalexistence} proved the global existence of strong solutions to \eqref{1.1} as initial data are the small perturbation of $(1, {\bf I}, {\bf 0})$ in $H^2(\mathbb{R}^3)$. In addition, with the extra $L^1(\mathbb{R}^3)$ assumption, the
optimal convergence rates of the solutions in $L^p$-norm with $2\leq p\leq 6$ and optimal convergence rates
of their spatial derivatives in $L^2$-norm were obtained. Hu and Wang \cite{hu2011global} and Qian and Zhang \cite{qian2010global} independently derived the global existence with initial data near equilibrium in the critical $L^2$ space. Very recently, Pan and Xu \cite{pan2019global} extended the works \cite{hu2011global, qian2010global} to the critical $L^p$ Besov space and further obtained the optimal time decay estimates of strong solutions in the general  $L^p$ critical framework. As for the incompressible viscoelastic flows, one could refer to the works \cite{chemin2001about, lin2005on, lei2008global, qian2010well, zhang2012global} and the references therein.


 In the case ${\bf F}\equiv {\bf 0}$, system \eqref{1.1} reduces to the  classical compressible Navier-Stokes equations. In the critical framework, for the compressible or incompressible Navier-Stokes system,  there have been a lot of results,  see for example \cite{cannone1997a,  charve2010global,  chen2010global, danchin2000global, danchin2016fourier, danchin2016incompressible, danchin2016optimal, fujita1964on,  haspot2011existence, kozono1994semilinear,  okita2014optimal,  xin2018optimal, xu2019a}. In particular, concerning the large time asymptotic behavior of strong solutions for the compressible Navier-Stokes equations in the critical framework, Okita \cite{okita2014optimal} performed low and high frequency decompositions and proved the time decay rate for strong solutions in the $L^2$  critical framework and in dimension $N\geq 3$.
 Danchin in the survey paper \cite{danchin2016fourier} proposed another description of the time decay which enables to proceed with dimension $N\geq 2$ in the $L^2$ critical framework. Recently,  Danchin and Xu \cite{danchin2016optimal} extended the method of \cite{danchin2016fourier} to derive optimal time decay rate in the
 general $L^p$ type critical spaces. Later on, depending on the refined time-weighted energy approach in the Fourier semi-group framework, Xu \cite{xu2019a} developed a general low-frequency condition for optimal decay estimates, where the regularity index $\sigma_1$ of $\dot{B}_{2,\infty}^{-\sigma_1}$
 belongs to a whole range $(1-\frac{N}{2}, \frac{2N}{p}-\frac{N}{2}]$.  Very recently, inspired by the ideas in \cite{guo2012decay, strain2006almost}, Xin and Xu \cite{xin2018optimal} developed a new energy  argument to remove the usual smallness condition of low frequencies studied in \cite{danchin2016optimal}.

 In this paper, motivated  by the works \cite{guo2012decay, pan2019global, strain2006almost, xin2018optimal}, we are going to establish the optimal decay for system \eqref{1.1} in the $L^p$ type critical framework without the smallness assumption of low frequencies. Now, let us first recall the global existence result of system \eqref{1.1} in the  critical $L^p$ framework (see \cite{pan2019global}).

\begin{theo}\label{th1.1} {\rm(}\cite{pan2019global}{\rm )}
Let $N\geq 2$ and $p$ satisfy
\be\label{1.100}
2\leq p\leq \min\{4, {2N}/(N-2)\}\,\,\,{\rm and},\,\,\,additionally,\,\,\,p\neq 4\,\,\,{\rm if}\,\,\,N=2.
\ee
Assume that $P^\prime(1)>0$. There exists a small positive constant $c=c(p, N, \lambda, \mu, P)$ and a universal integer $k_0\in \mathbb{Z}$ such that if \,$b_0\equ \rho_0-1\in \dot{B}
_{p,1}^{\frac{N}{p}},\, \bH_0\equ \bF_0-\bI\in \dot{B}
_{p,1}^{\frac{N}{p}},\,\bu_0\in \dot{B}
_{p,1}^{\frac{N}{p}-1}$ and if in addition $(b_0^\ell, \bH_0^\ell, \bu_0^\ell)\in\dot{B}
_{2,1}^{\frac{N}{2}-1}$ {\rm(} with the notation $z^\ell\equ \dot{S}_{k_0+1}z$ and $z^h=z-z^\ell$ {\rm)} with
\be\label{1.5}
\mathcal{X}_{p,0}\equ \|(b_0, \bH_0, \bu_0)\|_{\dot{B}_{2,1}^{\frac{N}{2}-1}}^\ell
+\|(b_0, \bH_0)\|_{\dot{B}_{p,1}^{\frac{N}{p}}}^{h}
+\|{\bf u}_0\|_{\dot{B}_{p,1}^{\frac{N}{p}-1}}^{h}\leq c,
\ee
then \eqref{1.1} has a unique global solution $(\rho, \bF, \bu)$ with $\rho=b+1, \bF=\bH+\bI$ and
 $(\rho, \bF, \bu)$ in the space $X_{p}$ defined by
 \begin{equation}\nonumber
 \left.\begin{array}{ll}\medskip\D
b^{\ell}\in\widetilde{\mathcal{C}}_b(\mathbb{R}_+;\dot{B}_{2,1}^{\frac{N}{2}-1})\cap L^1(\mathbb{R}_+;\dot{B}_{2,1}^{\frac{N}{2}+1}),\,\,\,\,\,\,
\bH^{\ell}\in\widetilde{\mathcal{C}}_b(\mathbb{R}_+;\dot{B}_{2,1}^{\frac{N}{2}-1})\cap L^1(\mathbb{R}_+;\dot{B}_{2,1}^{\frac{N}{2}+1}),\\\medskip\D
{\bu}^{\ell}\in\widetilde{\mathcal{C}}_b(\mathbb{R}_+;\dot{B}_{2,1}^{\frac{N}{2}-1})\cap L^1(\mathbb{R}_+;\dot{B}_{2,1}^{\frac{N}{2}+1}),\,\,\,\,\,\,b^{h}\in\widetilde{\mathcal{C}}_b(\mathbb{R}_+;\dot{B}_{p,1}^{\frac{N}{p}})\cap L^1(\mathbb{R}_+;\dot{B}_{p,1}^{\frac{N}{p}}),\\\medskip\D
  \bH^{h}\in\widetilde{\mathcal{C}}_b(\mathbb{R}_+;\dot{B}_{p,1}^{\frac{N}{p}})\cap L^1(\mathbb{R}_+;\dot{B}_{p,1}^{\frac{N}{p}}),\,\,\,\,\,\,\,\,\,\,\,\,\,{\bf u}^{h}\in\widetilde{\mathcal{C}}_b(\mathbb{R}_+;\dot{B}_{p,1}^{\frac{N}{p}-1})\cap L^1(\mathbb{R}_+;\dot{B}_{p,1}^{\frac{N}{p}+1}).
  \end{array}
  \right.
  \end{equation}
Moreover, we have for some constant $C=C(p, N, \lambda, \mu, P)$ and for any $t>0$,
\be\label{1.101}
\mathcal{X}_p(t)\leq C \mathcal{X}_{p,0},
\ee
with
\be\label{1.102}
\begin{split}
\mathcal{X}_p(t)&\equ \|(b, \bH, \bu)\|_{\widetilde{L}_t^\infty(\dot{B}_{2,1}^{\frac{N}{2}-1})\cap
{L}_t^1(\dot{B}_{2,1}^{\frac{N}{2}+1})}^\ell\\[1ex]
&\quad\quad\quad\quad\quad\quad\quad\quad
+\|(b, \bH)\|_{\widetilde{L}_t^\infty(\dot{B}_{p,1}^{\frac{N}{p}})\cap
{L}_t^1(\dot{B}_{p,1}^{\frac{N}{p}})}^h+\|\bu\|_{\widetilde{L}_t
^\infty(\dot{B}_{p,1}^{\frac{N}{p}-1})\cap
{L}_t^1(\dot{B}_{p,1}^{\frac{N}{p}+1})}^h.
\end{split}
\ee
\end{theo}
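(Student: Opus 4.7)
The plan is to construct the solution via a Friedrichs regularization (or an iteration scheme based on the linearized system) and to pass to the limit using uniform a priori estimates in the hybrid critical Besov framework, in the spirit of Danchin's work on the compressible Navier--Stokes system and its extension to the $L^p$ setting. Writing $b:=\rho-1$ and $\bH:=\bF-\bI$, system \eqref{1.1} becomes a perturbation of the linear system
\begin{equation*}
\partial_t b+\mathrm{div}\,\bu=f_1,\qquad \partial_t\bu-\mathcal{A}\bu+P'(1)\nabla b-\alpha\,\mathrm{div}\,\bH=f_2,\qquad \partial_t\bH-\nabla\bu=f_3,
\end{equation*}
where $\mathcal{A}\bu:=\mu\Delta\bu+(\lambda+\mu)\nabla\mathrm{div}\,\bu$ and the $f_i$ are at least quadratic in $(b,\bH,\bu)$. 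A structural point to exploit throughout is that the side condition $\mathrm{div}(\rho\bF^T)=0$ is preserved by the flow, so $\mathrm{div}\,\bH^T=-\mathrm{div}(b\bH^T)$ is effectively a quadratic quantity; this is the key source of damping for the elastic stress.

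After a Littlewood--Paley decomposition at the threshold $k_0$, I would analyze the linear semigroup in two regimes. In low frequencies I would work in $L^2$-based Besov spaces: the pressure-velocity block behaves like the barotropic acoustic system with parabolic eigenvalues $\sim -|\xi|^2$, and by introducing effective unknowns that diagonalize the coupling between $\nabla b$, $\mathrm{div}\,\bu$ and $\mathrm{div}\,\bH^T$ one recovers parabolic smoothing $\dot B^{N/2-1}_{2,1}\to L^1_t(\dot B^{N/2+1}_{2,1})$ for all three unknowns. In high frequencies I would switch to $L^p$-based Besov spaces and use the maximal $L^p$-regularity of $\mathcal{A}$, which yields the two-derivative gain for $\bu^h$; the equations for $b^h$ and $\bH^h$ are then transport (or damped transport) equations with source $\nabla\bu$, controlled by Danchin--Mucha commutator estimates at the regularity $\dot B^{N/p}_{p,1}$. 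The restriction \eqref{1.100} arises from the product laws in $\dot B^{N/p}_{p,1}$ together with the compatibility of low- and high-frequency thresholds via $\dot B^{N/2-1}_{2,1}\hookrightarrow \dot B^{N/p-1}_{p,1}$ in the high-frequency regime.

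For the nonlinear step I would estimate each $f_i$ in the spaces dictated by the linear analysis using paradifferential calculus: convective terms $\bu\cdot\nabla b$, $\bu\cdot\nabla\bH$, $(\bu\cdot\nabla)\bu$ are handled by product and commutator estimates; the quasilinear elastic term $\mathrm{div}(\rho\bF\bF^T)-\alpha\,\mathrm{div}\,\bH$ and the density-dependent viscosity contributions split into a linear piece plus a quadratic remainder absorbed by the smallness of $\mathcal{X}_p(t)$. Combined with the linear semigroup estimates, this yields an inequality of the form
\begin{equation*}
\mathcal{X}_p(t)\le C\,\mathcal{X}_{p,0}+C\,\mathcal{X}_p(t)^2,
\end{equation*}
which under the assumption $\mathcal{X}_{p,0}\le c$ closes by a standard continuity argument and gives the bound \eqref{1.101}. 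Uniqueness then follows from a stability estimate at one derivative below the existence regularity.

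The main obstacle I expect is the low-frequency damping of the elastic variable $\bH^\ell$. Unlike the density in pure Navier--Stokes, which is damped by the pressure-velocity coupling, $\bH$ enters the linear system only through $\mathrm{div}\,\bH$, so extracting the full dissipation $L^1_t(\dot B^{N/2+1}_{2,1})$ for $\bH^\ell$ requires an algebraic reduction combining the preserved constraint (used at the quadratic level), a careful choice of effective unknowns, and a detailed spectral analysis of the $5\times 5$ linear symbol. Once this is achieved, the high-frequency $L^p$ analysis is comparatively routine, being driven by maximal regularity for $\mathcal{A}$ and transport estimates for $(b^h,\bH^h)$.
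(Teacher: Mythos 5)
You should first note that the paper does not actually prove Theorem~\ref{th1.1}: it is cited verbatim from Pan and Xu \cite{pan2019global}, and the present paper takes it as a black box. So there is no in-paper proof to compare against. That said, the paper's Section~\ref{s:4} reuses the same low-frequency machinery as the Pan--Xu proof, and this makes it possible to assess your outline against that.

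Your overall roadmap (hybrid $L^2$/$L^p$ Littlewood--Paley splitting, parabolic smoothing from a diagonalized low-frequency symbol, maximal regularity plus damped transport for high frequencies, paradifferential estimates and a quadratic bootstrap $\mathcal{X}_p(t)\le C\mathcal{X}_{p,0}+C\mathcal{X}_p(t)^2$) is in the right spirit and matches the methodology of \cite{pan2019global} and its predecessors \cite{qian2010global,charve2010global}. You also correctly identify the true obstacle: the elastic variable $\bH$ enters the linearized momentum equation only through a divergence, so its low-frequency dissipation is not manifest.

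The genuine gap is that you invoke only the first intrinsic constraint $\nabla\cdot(\rho\bF^T)=\mathbf{0}$, and you misstate its consequence: it gives $\mathrm{div}\,\bH^T = -\nabla b - \mathrm{div}(b\bH^T)$, i.e.\ $-\nabla b$ plus quadratic, not a purely quadratic quantity (this is exactly \eqref{4.8}). More importantly, the identity that actually damps $\bH^\ell$ is driven by the \emph{second} intrinsic constraint $F^{lk}\partial_lF^{ij}-F^{lj}\partial_lF^{ik}=0$ (see \eqref{4.3}), which yields
\begin{equation*}
\Lambda^{-1}\partial_j\partial_kH^{ik}=-\Lambda H^{ij}-\Lambda^{-1}\partial_k\bigl(H^{lj}\partial_lH^{ik}-H^{lk}\partial_lH^{ij}\bigr),
\end{equation*}
i.e.\ \eqref{4.7}. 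It is this identity, applied after introducing $\omega=\Lambda^{-1}\mathrm{div}\,\bu$ and $e^{ij}=\Lambda^{-1}\partial_j u^i$, that turns the $(b,e^{ij},H^{ij})$ system into the wave--heat coupled form \eqref{3.1} whose symbol has full parabolic dissipation at low frequency. Without this curl-type constraint your proposed ``spectral analysis of the linear symbol'' would stall: the naive linearization leaves whole components of $\bH$ undamped, and the first constraint alone replaces $\mathrm{div}\,\bH^T$ by $-\nabla b$ without producing the $\Lambda H^{ij}$ coupling needed to close the cross-term energy estimates (see \eqref{3.1000}--\eqref{3.1001}). The first constraint is still essential, but in a complementary role: it is used in \eqref{4.5} to simplify the elastic stress divergence and in \eqref{3.1001} to convert $(\partial_i\partial_j b_k, H_k^{ij})$ into $\|\Lambda b_k\|_{L^2}^2$ plus a quadratic remainder. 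Your proposal should therefore be amended to use both structural identities explicitly; the role you assign to $\nabla\cdot(\rho\bF^T)=0$ alone is not sufficient, and the claim that $\mathrm{div}\,\bH^T$ is ``effectively quadratic'' should be corrected to $-\nabla b$ plus quadratic. A secondary omission is the mechanism of high-frequency damping for $\bH^h$: parabolic maximal regularity for $\bu^h$ does not by itself give the $L^1_t(\dot B^{N/p}_{p,1})$ bound on $\bH^h$ appearing in $X_p$; one needs an effective-velocity reduction (analogous to the barotropic case) that reveals a zeroth-order friction term for $(b^h,\bH^h)$.
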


To exhibit the large-time asymptotic description of the constructed solution in Theorem \eqref{th1.1}, it is convenient to rewrite \eqref{1.1} as the nonlinear perturbation form of constant equilibrium $(1, \bI, {\bf 0})$, looking at the nonlinearities as source terms. For simplicity, we assume that $P^\prime(1)=1$. After changing the functions as
$$
b=\rho-1,\,\,\,\,\,\,\bH=\bF-\bI,
$$
we see that system \eqref{1.1} becomes
\begin{equation}\label{2.2}
\left\{\begin{array}{ll}
\partial_tb+{\rm div}\bu=-{\rm div}(b\bu),\\[1ex]
\partial_tu^i-{\mathcal{A}}u^i+\partial_i b-\alpha\partial_kH^{ik}=-\bu\cdot\nabla u^i-I(b){\mathcal{A}}u^i-K(b)\partial_i b\\
\quad
+\alpha H^{jk}\partial_j H^{ik}+\displaystyle\frac{1}{1+b}\Big({\rm div}\big(2\widetilde{\mu}(b)D(\bu)+\widetilde{\lambda}(b){\rm div}\bu\,{\bI}\big)\Big)^i,\\[2ex]
\partial_t\bH-\nabla\bu=\nabla\bu \bH-\bu\cdot\nabla\bH,\\[1ex]
(b, \bH, \bu)|_{t=0}=(b_0, \bH_0, \bu_0),
\end{array}
\right.
\end{equation}
with
{\small\begin{equation}\nonumber
\begin{array}{ll}\displaystyle
I(b)\equ \frac{b}{1+b},\,\,\,\,\,\,K(b)\equ \frac{P^\prime(1+b)}
{1+b}-1,\,\,\,
\widetilde{\mu}(b)\equ \mu(1+b)-\mu(1),\,\,\,\widetilde{\lambda}(b)\equ\lambda(1+b)-\lambda(1),\\[2ex]
\mathcal{A}\equ \bar{\mu}\Delta+(\bar{\lambda}+\bar{\mu})\nabla{\rm div}\,\,\,{\rm such\,\, that}\,\,\,2\bar{\mu}+\bar{\lambda}=1\,\,\,{\rm and}\,\,\,\bar{\mu}>0 \,\, (\bar{\mu}\equ\mu(1)\,\,\,{\rm and}\,\,\,\bar{\lambda}\equ\lambda(1)).
\end{array}
\end{equation}}

Let us emphasize that, in the higher order Sobolev spaces,  Hu and Wang \cite{hu2013globalexistence} investigated the optimal time decay rates of global solutions to system \eqref{1.1}.  While in the critical $L^p$-type framework, Pan and Xu \cite{pan2019global} also studied their large-time behavior. In this paper, we will further studied the large-time behavior of global strong solutions to \eqref{1.1} in the $L^p$-type critical Besov space, and our results could be seen as the complement of the ones in \cite{pan2019global} (see Remarks \ref{re1} and \ref{re2} below).

Now, we state the main results of this paper as follows.
\begin{theo}\label{th2}
Let $N\geq 2$ and $p$ satisfy assumption \eqref{1.100}. Let $(\rho, \bF, \bu)$ be the global solution addressed by Theorem \ref{th1.1}. If in addition
$(b_0, \bH_0, \bu_0)^\ell\in \dot{B}_{2,\infty}^{-\sigma_1}\,(1-\frac{N}{2}<\sigma_1\leq \sigma_0
\equ \frac{2N}{p}-\frac{N}{2})$ such that $\|(b_0, \bH_0, \bu_0)\|_{\dot{B}_{2,\infty}^{-\sigma_1}}^\ell$
is bounded, then we have
\begin{equation}\label{1}
\|(b, \bH, \bu)(t)\|_{\dot{B}_{p,1}^{\sigma}}\lesssim (1+t)^{-\frac{N}{2}(\frac{1}{2}-\frac{1}{p})-\frac{\sigma+\sigma_1}{2}}
\end{equation}
where $-\sigma_1
-\frac{N}{2}+\frac{N}{p}<\sigma\leq\frac{N}{p}-1$ for all $t\geq 0$.
\end{theo}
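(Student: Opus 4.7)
Following the pure-energy philosophy of Xin--Xu \cite{xin2018optimal}, the plan is to avoid any smallness assumption on the low frequencies and instead to build a time-weighted Lyapunov functional
\[
\mathcal{D}_p(t) := \sup_{s\in(\varepsilon-\sigma_1,\,\sigma_0]} \bigl\| \tau^{(\sigma_1+s)/2}(b,\bH,\bu)\bigr\|_{\widetilde L^\infty_t(\dot B^s_{2,1})}^\ell + \bigl\| \tau^{\beta}(b,\bH,\bu)\bigr\|_{\widetilde L^\infty_t(\dot B^{N/p-1}_{p,1}\cap \dot B^{N/p}_{p,1})}^h,
\]
with $\beta=\frac{N}{2p}+\frac{\sigma_1}{2}+\frac{1}{2}$ tuned so that the two pieces decay at the rate forced by \eqref{1}. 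My goal is a self-improving inequality of the form
\[
\mathcal{D}_p(t)\leq C\bigl(\mathcal{X}_{p,0}+\|(b_0,\bH_0,\bu_0)\|^\ell_{\dot B^{-\sigma_1}_{2,\infty}}\bigr)+C\,\mathcal{D}_p(t)^2.
\]
Once this is closed by the smallness $\mathcal{X}_{p,0}\leq c$ of Theorem \ref{th1.1}, the assertion \eqref{1} will follow by Bernstein's inequality to transfer the low-frequency part from $L^2$- to $L^p$-based Besov spaces (absorbing the loss $N(\tfrac12-\tfrac1p)$) together with the high-frequency embedding $\dot B^{N/p-1}_{p,1}\cap \dot B^{N/p}_{p,1}\hookrightarrow \dot B^{\sigma}_{p,1}$ for $\sigma\le N/p-1$.

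\textbf{Main steps.} The first key ingredient is a uniform-in-time propagation of the low-frequency $\dot B^{-\sigma_1}_{2,\infty}$ norm. I would localize \eqref{2.2} with $\dot\Delta_j$, $j\leq k_0$, and diagonalize the linearized system around $(1,\bI,{\bf 0})$ as in \cite{pan2019global}, so that the compressible effective unknown and the divergence of $\bH$ satisfy a parabolic block, while the remaining components decouple into heat-type equations. A dyadic $L^2$ energy estimate combined with standard paraproduct/remainder bounds for the nonlinear terms $\mathrm{div}(b\bu)$, $\bu\cdot\nabla\bu$, $I(b)\mathcal A\bu$, $K(b)\nabla b$, $H^{jk}\partial_jH^{ik}$ and $\nabla\bu\,\bH$ will yield
\[
\sup_{\tau\le t}\|(b,\bH,\bu)(\tau)\|^\ell_{\dot B^{-\sigma_1}_{2,\infty}}\leq C\|(b_0,\bH_0,\bu_0)\|^\ell_{\dot B^{-\sigma_1}_{2,\infty}}+C\,\mathcal{X}_p(t)\sup_{\tau\le t}\|(b,\bH,\bu)(\tau)\|^\ell_{\dot B^{-\sigma_1}_{2,\infty}},
\]
where the range $1-\frac{N}{2}<\sigma_1\leq \frac{2N}{p}-\frac{N}{2}$ is precisely what the product laws of \cite{danchin2016optimal} demand for these nonlinearities. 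Next, the elementary interpolation
\[
\|z\|^\ell_{\dot B^s_{2,1}}\lesssim \bigl(\|z\|^\ell_{\dot B^{-\sigma_1}_{2,\infty}}\bigr)^{\theta}\bigl(\|z\|^\ell_{\dot B^{s+\varepsilon}_{2,1}}\bigr)^{1-\theta},\qquad \theta=\frac{\varepsilon}{\sigma_1+s+\varepsilon},
\]
inserted into the Duhamel formula for the low-frequency linearized system, converts the bound $\|(b,\bH,\bu)\|^\ell_{L^1_t(\dot B^{N/2+1}_{2,1})}\le C\mathcal{X}_{p,0}$ of \eqref{1.102} into the power-type decay $\tau^{-(\sigma_1+s)/2}$, up to a quadratic remainder. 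At high frequency, I would use the parabolic damping of $(b^h,\bH^h,\bu^h)$ together with the a priori estimate of \cite{pan2019global} to produce the weight $\tau^{\beta}$, with the nonlinear source dominated by the already established low-frequency decay of $\mathcal{D}_p(t)$. Combining both frequency regimes yields the sought-for quadratic inequality.

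\textbf{Main obstacle.} The principal difference with the compressible Navier--Stokes case treated in \cite{xin2018optimal} is the transport equation $\partial_t\bH-\nabla\bu=\nabla\bu\,\bH-\bu\cdot\nabla\bH$, which is purely hyperbolic. Consequently the heat-type low-frequency decay is not enjoyed by $\bH$ alone but only by the combination of $\bH$ with the gradient of $b$ that parabolizes the linearized block, as identified in \cite{pan2019global}. The technical heart of the proof therefore lies in verifying that (i) this change of unknown is compatible with the $\dot B^{-\sigma_1}_{2,\infty}$ propagation above, in the sense that the associated matrix pseudodifferential operators preserve the weighted bounds, and (ii) the elastic nonlinearities $\nabla\bu\,\bH$ and $H^{jk}\partial_jH^{ik}$ can be controlled in $\dot B^{-\sigma_1}_{2,\infty}$ uniformly for every $\sigma_1\in(1-\frac{N}{2},\frac{2N}{p}-\frac{N}{2}]$; this second requirement is exactly what determines the admissible range of negative regularity indices stated in the theorem.
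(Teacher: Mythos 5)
Your first step—propagating the low-frequency $\dot B^{-\sigma_1}_{2,\infty}$ norm via a dyadic $L^2$ energy estimate on the reformulated system and then applying Gronwall, closing by the smallness $\mathcal X_{p,0}\le c$ of Theorem~\ref{th1.1}—is essentially what the paper does in Lemma~\ref{le3} and \eqref{4.74} (though the paper's energy is built with cross terms of the form $-r(\Lambda b_k,\omega_k)$ and $2r(\Lambda\bH_k,{\bf e}_k)$ rather than by diagonalizing, and the bound has an additional additive source $\int_0^t A_2\,d\tau$; this is a cosmetic difference). Your remark that the structural identities \eqref{4.3} are needed to parabolize the $\bH$-block is accurate and corresponds to how the paper uses \eqref{4.7} and \eqref{4.8}.

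The second step, however, contains a genuine gap. You propose a time-weighted functional $\mathcal D_p(t)$ plus the Duhamel formula, aiming for a \emph{quadratic self-improvement}
$\mathcal D_p(t)\le C\bigl(\mathcal X_{p,0}+\|(b_0,\bH_0,\bu_0)\|^\ell_{\dot B^{-\sigma_1}_{2,\infty}}\bigr)+C\,\mathcal D_p(t)^2,$
and you claim to close it by the smallness $\mathcal X_{p,0}\le c$. But such an inequality can only be bootstrapped (via continuity) if the full constant term is small, i.e.\ if $\|(b_0,\bH_0,\bu_0)\|^\ell_{\dot B^{-\sigma_1}_{2,\infty}}$ is also small---and this is precisely the hypothesis Theorem~\ref{th2} removes, since the theorem only assumes that norm to be \emph{bounded}. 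In effect, your scheme silently re-imports the low-frequency smallness of Danchin--Xu \cite{danchin2016optimal} and Xu \cite{xu2019a}, contradicting your own stated goal.

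The paper's route avoids this obstruction entirely and is not a Duhamel argument. Starting from the global energy balance \eqref{5.1} inherited from Theorem~\ref{th1.1}, it uses the interpolation in Proposition~\ref{pra.7},
$\|(b,\bH,\bu)\|^\ell_{\dot B^{N/2-1}_{2,1}}\lesssim\bigl(\|(b,\bH,\bu)\|^\ell_{\dot B^{-\sigma_1}_{2,\infty}}\bigr)^{\eta_0}\bigl(\|(b,\bH,\bu)\|^\ell_{\dot B^{N/2+1}_{2,\infty}}\bigr)^{1-\eta_0},$
together with the uniform bound $\|(b,\bH,\bu)\|^\ell_{\dot B^{-\sigma_1}_{2,\infty}}\le C_0$ obtained in Step~1 to bound the dissipation integrand \emph{from below} by a superlinear power of the energy; the high-frequency powers are linearized using the smallness of $\mathcal X_p(t)$. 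This yields a Lyapunov-type integral inequality \eqref{5.41} whose integrand is a power $1+\tfrac{2}{N/2-1+\sigma_1}$ of the Lyapunov functional, and integrating this differential inequality directly gives the algebraic decay \eqref{5.42} with no further smallness: the constant $C_0$ enters only multiplicatively in the implicit constant, not as a quantity to be made small. The decay \eqref{1} over the full range of $\sigma$ then follows by one more interpolation \eqref{5.44}. Unless you convert your quadratic self-improvement into such a superlinear differential inequality, your proposal proves a weaker statement in which the low-frequency data are again assumed small.
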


Denote $\Lambda^sf\equ \mathcal{F}^{-1}(|\xi|^s\mathcal{F}f)$ for $s\in\mathbb{R}$. We would obtain the following $\dot{B}_{2,\infty}^{-\sigma_1}$-$L^r$ type decay estimates by using improved Gagliardo-Nirenberg inequalities.
\begin{col}\label{col1}
Let those assumptions of Theorem \ref{th2} be fulfilled. Then the corresponding solution $(b, \bH, \bu)$ admits
\begin{equation}\nonumber
\|\Lambda^l (b, \bH, \bu)\|_{L^r}\lesssim (1+t)^{-\frac{N}{2}(\frac{1}{2}-\frac{1}{r})-\frac{l+\sigma_1}{2}},
\end{equation}
where  $-\sigma_1
-\frac{N}{2}+\frac{N}{p}<l+\frac{N}{p}-\frac{N}{r}\leq \frac{N}{p}-1$ for $p\leq r\leq \infty$ and $t\geq 0$.
\end{col}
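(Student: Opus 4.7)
The plan is to reduce the $L^r$ claim on $\Lambda^l(b,\bH,\bu)$ to the critical-regularity Besov decay already granted by Theorem \ref{th2}. The reduction relies on two standard homogeneous Littlewood-Paley embeddings, which in this context realise the announced improved Gagliardo-Nirenberg interpolation in the form of a straight embedding chain. The first ingredient is $\dot{B}_{r,1}^{0}\hookrightarrow L^r$ for every $r\in[p,\infty]$ (including the endpoint $r=\infty$ thanks to the $\ell^1$-summation on frequencies built into $\dot{B}_{\infty,1}^{0}$), which yields $\|\Lambda^l f\|_{L^r}\lesssim \|f\|_{\dot{B}_{r,1}^{l}}$. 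The second is the classical Besov embedding $\dot{B}_{p,1}^{s}\hookrightarrow \dot{B}_{r,1}^{s-N(1/p-1/r)}$, valid whenever $p\leq r\leq\infty$. Chaining them with $s=l+N/p-N/r$ produces
\begin{equation*}
\|\Lambda^l f\|_{L^r}\lesssim \|f\|_{\dot{B}_{p,1}^{l+N/p-N/r}}.
\end{equation*}

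Next, I would set $\sigma:=l+N/p-N/r$ and observe that the hypothesis $-\sigma_1-N/2+N/p<l+N/p-N/r\leq N/p-1$ of the corollary is precisely the admissibility range $-\sigma_1-N/2+N/p<\sigma\leq N/p-1$ appearing in Theorem \ref{th2}. Applying \eqref{1} to $f=(b,\bH,\bu)$ at this particular value of $\sigma$ then gives
\begin{equation*}
\|\Lambda^l(b,\bH,\bu)\|_{L^r}\lesssim (1+t)^{-\frac{N}{2}(\frac{1}{2}-\frac{1}{p})-\frac{\sigma+\sigma_1}{2}}.
\end{equation*}
A direct algebraic simplification of the exponent, using $\sigma=l+N/p-N/r$, collapses the right-hand side to the claimed $(1+t)^{-\frac{N}{2}(\frac{1}{2}-\frac{1}{r})-\frac{l+\sigma_1}{2}}$.

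I do not anticipate a serious obstacle here, since the whole argument is a two-step embedding reduction followed by a direct invocation of Theorem \ref{th2}. The only point that genuinely requires care is checking that the range of admissible pairs $(l,r)$ stated in the corollary translates faithfully through the substitution $\sigma=l+N/p-N/r$ into the range of $\sigma$ permitted by Theorem \ref{th2}, and that the endpoint $r=\infty$ is handled by the $\ell^1$-summation inherent in the definition of $\dot{B}_{\infty,1}^{0}$ rather than by a classical Sobolev embedding (which would fail there).
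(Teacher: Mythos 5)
Your argument is correct and reaches the stated decay rate, but it takes a genuinely different route from the paper. You go through the clean embedding chain $\|\Lambda^l f\|_{L^r}\lesssim\|\Lambda^l f\|_{\dot B_{r,1}^0}\approx\|f\|_{\dot B_{r,1}^l}\lesssim\|f\|_{\dot B_{p,1}^{l+N/p-N/r}}$ (the last step being the $\dot B_{p,1}^s\hookrightarrow\dot B_{r,1}^{s-N(1/p-1/r)}$ embedding of Proposition \ref{pr2.1}), so the corollary falls out of a single application of Theorem \ref{th2} at $\sigma=l+N/p-N/r$, whose admissible range matches the hypothesis of the corollary exactly; the exponent arithmetic $-\tfrac N2(\tfrac12-\tfrac1p)-\tfrac{\sigma+\sigma_1}2=-\tfrac N2(\tfrac12-\tfrac1r)-\tfrac{l+\sigma_1}2$ then closes the argument. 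The paper instead invokes the Gagliardo--Nirenberg interpolation of Proposition \ref{pra.8} in the form $\|\Lambda^l f\|_{L^r}\lesssim\|\Lambda^m f\|_{L^p}^{1-\eta_2}\|\Lambda^k f\|_{L^p}^{\eta_2}$ with $m=\tfrac Np-1$ and $k=-\sigma_1-N(\tfrac12-\tfrac1p)+\varepsilon$ (the small $\varepsilon>0$ is needed because the lower endpoint of the admissible $\sigma$-window in Theorem \ref{th2} is open), applies Theorem \ref{th2} at these two endpoints through $\dot B_{p,1}^0\hookrightarrow L^p$, and then combines. Both proofs are sound; yours is more direct and avoids the $\varepsilon$-perturbation, while the paper's makes the Gagliardo--Nirenberg mechanism behind the decay explicit. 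Your remark about the $r=\infty$ endpoint being handled by the $\ell^1$-summation in $\dot B_{\infty,1}^0$ rather than by a Sobolev-type embedding is exactly the right caveat.
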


We give some comments as follows.

\begin{re}\label{re1}
{\rm The low-frequency assumption of initial data in \cite{pan2019global} is at the endpoint $\sigma_0$ and the corresponding norm needs to be small enough, i.e., there exists a positive constant $c=c(p, N, \mu, \lambda, P)$ such that
$
\|(b_0,\bH_0,\bu_0)\|_{\dot{B}_{2,\infty}^{-\sigma_0}}^\ell\leq c\,\,\,{\rm with}\,\,\,\sigma_0\equ \frac{2N}{p}-\frac{N}{2}.
$
Here, the new lower bound $1-\frac{N}{2}<\sigma_1\leq \sigma_0
$ enables us to enjoy larger freedom on the choice of $\sigma_1$, which allows to obtain more optimal decay estimates in the $L^p$ framework. In addition, the smallness of low frequencies is no longer needed in Theorem \ref{th2} and Corollary \ref{col1}.}
\end{re}
\begin{re}\label{re2}
 {\rm   In \cite{pan2019global}, there is a little loss on decay rates  due to the use of different Sobolev embeddings at low (or high) frequencies. For example, when $\sigma_1=\sigma_0$, the result in \cite{pan2019global} presents that the solution itself decays to equilibrium in $L^p$ norm with the rate of $O(t^{-\frac{N}{p}+\frac{N}{4}})$, which is no faster than that of  $O(t^{-\frac{N}{2p}})$ derived from  Corollary \ref{col1} above. }
 \end{re}
 \begin{re}
 {\rm To illustrate the decay rates in Corollary \ref{col1} are optimal, we are now in a position to exhibit  the decay rates of the heat kernel
\begin{equation}\label{1.9}
E(t)U_0\equ e^{-t\Delta}U_0.
\end{equation}
Taking the Fourier transform of \eqref{1.9} yields
\begin{equation}\nonumber
\mathcal{F}(E(t)U_0)(\xi)=e^{-t|\xi|^2}\mathcal{F}U_0(\xi).
\end{equation}
It follows from Hausdorff-Young and H\"{o}lder inequalities that
$$
\|E(t)U_0\|_{L^p}\leq \|\mathcal{F}(E(t)U_0)(\xi)\|_{L^{p^\prime}}\leq \|e^{-t|\xi|^2}\|_{L^r}\|\mathcal{F}U_0(\xi)\|_{L^{q^\prime}}\leq\|U_0\|_{L^q}t^{-\frac{N}{2r}},
$$
where $\frac{1}{p}+\frac{1}{p^\prime}=\frac{1}{q}+\frac{1}{q^\prime}=1$, $\frac{1}{p^\prime}
=\frac{1}{q^\prime}+\frac{1}{r}$, $p\geq 2$ and $1\leq q\leq 2$. Therefore, we obtain $r=p$ if choosing $q=\frac{p}{2}$, i.e., the heat kernel  has the time-decay rate of $O(t^{-\frac{N}{2p}})$ in $L^p$ norm if $U_0\in L^{\frac{p}{2}}$. Note that the embedding $L^{\frac{p}{2}}\hookrightarrow \dot{B}_{2,\infty}^{-\sigma_0}$.  One has the global solution of \eqref{2.2} decays to the constant equilibrium with the same rate if taking the endpoint regularity $\sigma_1=\sigma_0$. Thus, those decay rates in Corollary \ref{col1} are optimal.}
 \end{re}

 \begin{re}\label{re3}
   {\rm  The condition \eqref{1.100} may allow  us to consider the case $p>N$, so that the regularity index $\frac{N}{p}-1$ of $\bu$ becomes negative when $N=2, 3$. Our results are thus suitable for large highly oscillating initial velocities
   (see \cite{charve2010global, chen2010global} for more details). }
 \end{re}

 \begin{re}\label{re4}
{\rm As pointed out in \cite{xin2018optimal}, the nonlinear estimates in the low frequencies  play a key role in proving Theorem \ref{th2}. They employed  different Sobolev embeddings and interpolations  to handle the nonlinear terms in the non oscillation case $(2\leq p \leq N)$ and the oscillation case $(p>N)$, respectively. Here,  we develop a non-classical product estimate in the low frequencies (see \eqref{4.2} below), which enables us to unify  the estimates in the non oscillation case and the oscillation one.}
 \end{re}

 The rest of this paper is arranged as follows. In  Section \ref{s:3}, we first review some basic properties of homogeneous Besov spaces and give some classical and non-classical product estimates in Besov spaces.  Section \ref{s:4} is devoted to estimating $L^2$-type Besov norms at low frequencies, which plays an important role in deriving the Lyapunov-type inequality for energy norms. Section \ref{s:5} presents the proofs of Theorem \ref{th2} and Corollary \ref{col1}.
\section{Preliminaries}\label{s:3}
\setcounter{equation}{0}\setcounter{section}{2}\indent
Throughout the paper, $C$ stands for a harmless ``constant", and we sometimes write $A\lesssim B$ as an equivalent to $A\leq CB$. The notation $A\approx B$ means that $A\lesssim B$ and $B\lesssim A$. For any Banach space $X$ and $u, v\in X$, we agree that $\|(u,v)\|_X\equ \|u\|_X+\|v\|_X$. For  $p\in [1,+\infty]$ and $T>0$, the notation $L^p(0,T;X)$ or $L^p_T(X)$
denotes the set of measurable functions $f:[0,T]\rightarrow X$ with $t\mapsto
\|f(t)\|_X$ in $L^p(0,T)$, endowed with the norm
$
\|f\|_{L^p_T(X)}\equ\bigl{\|}\|f\|_X\bigr{\|}_{L^p(0,T)}.
$
We denote by $\mathcal{C}([0,T];X)$  the set of continuous functions from
$[0,T]$ to $X$.

We first recall the definition of homogeneous Besov spaces, which could be defined by using a dyadic partition of unity in Fourier variables called homogeneous Littlewood-Paley decomposition. Then  the product estimates in homogeneous Besov spaces are presented.
\subsection{Homogeneous Besov spaces}
 Choose a radial function $\varphi\in \mathcal{S}(\mathbb{R}^N)$ supported in $\mathcal{C}=\{\xi\in\mathbb{R}^N, \frac{3}{4}\leq |\xi|\leq \frac{8}{3}\}$ such that
$
\sum_{j\in\mathbb{Z}}\varphi(2^{-j}\xi)=1\quad\!\!{\rm if}\quad\!\!\xi\neq 0.
$
The homogeneous frequency localization operator $\dot{\Delta}_j$ and $\dot{S}_j$ are defined by
$$
\dot{\Delta}_j u=\varphi (2^{-j}D)u, \quad\,\dot{S}_j u=\sum_{k\leq j-1}\dot{\Delta}_k u\quad\,{\rm for}\quad\,j\in\mathbb{Z}.
$$
From this expression, we could see that
\be\label{1.8}
\dot{\Delta}_j\dot{\Delta}_kf=0\,\,\,{\rm if}\,\,\,|j-k|\geq 2,\,\,\,\,\,{\rm and}\,\,\,\,\dot{\Delta}_j(\dot{S}_{k-1}\dot{\Delta}_kf)=0
\,\,\,{\rm if}\,\,\,|j-k|\geq 5.
\ee

Let us denote the space $\mathcal{Y}^\prime(\mathbb{R}^N)$ by the quotient space of $\mathcal{S}^\prime(\mathbb{R}^N)/\mathcal{P}$ with the polynomials space $\mathcal{P}$. The formal equality
$
u=\sum_{k\in\mathbb{Z}}\dot{\Delta}_k u
$
holds true for $u\in \mathcal{Y}^\prime(\mathbb{R}^N)$ and is called the homogeneous Littlewood-Paley decomposition.

We then define the homogeneous Besov space as
$$
\dot{B}_{p,r}^s={\Big\{}f\in\mathcal{Y}^\prime(\mathbb{R}^N): \|f\|_{\dot{B}_{p,r}^s}<+\infty{\Big\}},
$$
for $s\in\mathbb{R}$, $1\leq p, r\leq +\infty$, where
$$
\|f\|_{\dot{B}_{p,r}^s}\equ\|2^{ks}\|\dot{\Delta}_k f\|_{L^p}\|_{\ell^r}.
$$

Next, we introduce the so-called Chemin-Lerner space $\widetilde{L}_T^\rho(\dot{B}_{p,r}^s)$ (see\,\cite{chemin1995flot}):
$$
\widetilde{L}_T^\rho(\dot{B}_{p,r}^s)={\Big\{}f\in (0,+\infty)\times\mathcal{Y}^\prime(\mathbb{R}^N):
\|f\|_{\widetilde{L}_T^\rho(\dot{B}_{p,r}^s)}<+\infty{\Big\}},
$$
where
$$
\|f\|_{\widetilde{L}_T^\rho(\dot{B}_{p,r}^s)}\equ\bigl{\|}2^{ks}\|\dot{\Delta}_k f(t)\|_{L^\rho(0,T;L^p)}\bigr{\|}_{\ell^r}.
$$
The index $T$ will be omitted if $T=+\infty$. A direct application of Minkowski's inequality implies that
$$
L_T^\rho(\dot{B}_{p,r}^s)\hookrightarrow \widetilde{L}_T^\rho(\dot{B}_{p,r}^s)\,\,\,{\rm if}\,\,\,r\geq \rho,
\quad\,{\rm and}\quad\,
\widetilde{L}_T^\rho(\dot{B}_{p,r}^s)\hookrightarrow {L}_T^\rho(\dot{B}_{p,r}^s)\,\,\,{\rm if}\,\,\,\rho\geq r.
$$

We shall denote by $\widetilde{\mathcal{C}}_b([0,T]; \dot{B}^s_{p,r})$ the subset of functions of $\widetilde{L}^\infty_T(\dot{B}^s_{p,r})$ which are also continuous from
$[0,T]$ to $\dot{B}^s_{p,r}$.
Also, for a tempered distribution $f$ and a universal integer $k_0$, we denote
$$
f^\ell\equ\sum_{k\leq k_0}\dot{\Delta}_kf,\,\,\,\,\,\,\,f^h\equ f-f^\ell.
$$

We will repeatedly use the following Bernstein's inequality throughout the paper:
\begin{lem}\label{le2.1}
{\rm(}see {\rm \cite{chemin1998perfect}}{\rm )} Let $\mathcal{C}$ be an annulus and $\mathcal{B}$ a ball, $1\leq p\leq q\leq +\infty$. Assume that $f\in L^p(\mathbb{R}^N)$, then for any nonnegative integer $k$, there exists constant $C$ independent of $f$, $k$ such that
$$
{\rm supp} \hat{f}\subset\lambda \mathcal{B}\Rightarrow\|D^k f\|_{L^q(\mathbb{R}^N)}:=\sup_{|\alpha|=k}\|\partial^\alpha f\|_{L^q(\mathbb{R}^N)}\leq C^{k+1}\lambda^{k+N(\frac{1}{p}-\frac{1}{q})}\|f\|_{L^p(\mathbb{R}^N)},
$$
\be\nonumber
{\rm supp} \hat{f}\subset\lambda\mathcal{C}\Rightarrow C^{-k-1}\lambda^k\|f\|_{L^p(\mathbb{R}^N)}\leq \|D^k f\|_{L^p(\mathbb{R}^N)}\leq
C^{k+1}\lambda^k\|f\|_{L^p(\mathbb{R}^N)}.
\ee
\end{lem}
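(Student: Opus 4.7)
The plan is to prove both directions by reducing to Young's convolution inequality after inserting a smooth Fourier cutoff, preceded by a scaling normalization. First, I would reduce to $\lambda=1$ via $f_\lambda(x)\equ f(x/\lambda)$: this rescales $L^p$ norms by $\lambda^{N/p}$, derivatives of order $k$ by $\lambda^{-k}$, and Fourier support from $\lambda\mathcal{B}$ (resp.\ $\lambda\mathcal{C}$) back to $\mathcal{B}$ (resp.\ $\mathcal{C}$), so matching powers of $\lambda$ produces exactly the exponents $k+N(1/p-1/q)$ and $k$ appearing in the statement. Thus it suffices to prove the case $\lambda=1$ with the constant $C^{k+1}$.

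For the upper bound, I would pick once and for all an auxiliary $\phi\in C_c^\infty(\mathbb{R}^N)$ that is identically $1$ on a fixed neighborhood of $\mathcal{B}$ (resp.\ of $\mathcal{C}$, with $\mathrm{supp}\,\phi$ bounded away from $0$). Since $\hat f=\phi\,\hat f$, one has $\partial^\alpha f=g_\alpha\ast f$ with $g_\alpha\equ \mathcal{F}^{-1}\bigl((i\xi)^\alpha\phi(\xi)\bigr)$, and Young's inequality with $1+\tfrac{1}{q}=\tfrac{1}{p}+\tfrac{1}{r}$ yields $\|\partial^\alpha f\|_{L^q}\leq \|g_\alpha\|_{L^r}\|f\|_{L^p}$. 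The analytic core is then the claim that $\|g_\alpha\|_{L^r}\leq C^{k+1}$ uniformly in $|\alpha|=k$; this follows from the standard Schwartz estimate
\[
(1+|x|)^{2M}\,|g_\alpha(x)|\;\lesssim\;\bigl\|(I-\Delta_\xi)^M[(i\xi)^\alpha\phi(\xi)]\bigr\|_{L^1_\xi},
\]
with $M$ chosen large enough to make the weight $(1+|x|)^{-2M}$ lie in $L^r$.

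For the reverse inequality in the annular case, I would exploit that $|\xi|\geq c>0$ on $\mathcal{C}$, together with the multinomial identity $|\xi|^{2k}=\sum_{|\beta|=k}\binom{k}{\beta}(i\xi)^\beta\overline{(i\xi)^\beta}$, to invert $|\xi|^{2k}$ by a smooth compactly supported Fourier multiplier. Choosing $\chi\in C_c^\infty$ with $\chi\equiv 1$ on $\mathcal{C}$ and $\mathrm{supp}\,\chi\subset\{|\xi|\geq c/2\}$, the identity $\hat f=\chi\,\hat f$ gives
\[
f\;=\;\sum_{|\beta|=k}\binom{k}{\beta}\,\mathcal{F}^{-1}\!\left(\chi(\xi)\,\frac{\overline{(i\xi)^\beta}}{|\xi|^{2k}}\right)\,\ast\,\partial^\beta f,
\]
and Young's inequality combined with the same Schwartz decay estimate as above produces $\|f\|_{L^p}\leq C^{k+1}\|D^k f\|_{L^p}$, which is the desired lower bound.

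The main obstacle is producing the uniform geometric constant $C^{k+1}$ rather than an unspecified $k$-dependent constant. This requires careful bookkeeping in the Leibniz expansion of $(I-\Delta_\xi)^M[(i\xi)^\alpha\phi]$ (and, in the reverse estimate, of $(I-\Delta_\xi)^M[\chi(\xi)\overline{(i\xi)^\beta}/|\xi|^{2k}]$): each derivative falling on $(i\xi)^\alpha$ produces a polynomial factor in $\xi$ of order at most $k^{|\gamma|}R^{k-|\gamma|}$ on the fixed compact set $\mathrm{supp}\,\phi\subset\{|\xi|\leq R\}$, while derivatives of $|\xi|^{-2k}$ on $\{|\xi|\geq c/2\}$ grow like $k^{j}c^{-2k-j}$. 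Summing over the $O(N^k)$ multi-indices of length $k$ and absorbing the binomial factors $\binom{k}{\beta}$ (whose total is $N^k$) into a single base $C=C(N,M,\phi,c)$ yields the required bound geometric in $k$, completing the proof.
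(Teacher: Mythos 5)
Your proof is correct. Note that the paper itself offers no proof of this lemma: it is quoted verbatim as a classical result with a citation to Chemin et al., so there is no argument of the authors' to compare against. What you have reconstructed is precisely the standard textbook proof (it is essentially Lemma 2.1 of Bahouri--Chemin--Danchin): normalize to $\lambda=1$ by dilation, write $\partial^\alpha f=g_\alpha\ast f$ with $g_\alpha=\mathcal{F}^{-1}\bigl((i\xi)^\alpha\phi\bigr)$ for a cutoff $\phi\equiv 1$ near the ball, apply Young's inequality with $1+\tfrac1q=\tfrac1p+\tfrac1r$, and control $\|g_\alpha\|_{L^r}$ through the weighted $L^\infty$ bound coming from $(I-\Delta_\xi)^M$; for the reverse inequality on the annulus, invert $|\xi|^{2k}=\sum_{|\beta|=k}\binom{k}{\beta}\xi^{2\beta}$ by a smooth multiplier supported away from the origin. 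Your attention to the geometric constant is the right instinct and your bookkeeping is sound: with $M$ fixed (any $M>N/2$ works), each Leibniz term contributes at most $k^{2M}\max(1,R)^k\le C^{k+1}$, the number of such terms depends only on $M$ and $N$, and the extra factor $\sum_{|\beta|=k}\binom{k}{\beta}=N^k$ in the lower bound is absorbed by enlarging the base $C$. No gaps.
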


Let us now state some classical properties for the Besov spaces.
\begin{prop}\label{pr2.1} The following properties hold true:

\medskip
{\rm 1)} Derivation: There exists a universal constant $C$ such that
$$
C^{-1}\|f\|_{\dot{B}_{p,r}^s}\leq \|\nabla f\|_{\dot{B}_{p,r}^{s-1}}\leq C\|f\|_{\dot{B}_{p,r}^s}.
$$

{\rm 2)} Sobolev embedding: If $1\leq p_1\leq p_2\leq\infty$ and $1\leq r_1\leq r_2\leq\infty$, then $\dot{B}_{p_1, r_1}^s\hookrightarrow \dot{B}_{p_2, r_2}^{s-\frac{N}{p_1}+\frac{N}{p_2}}$.

{\rm 3)} Real interpolation: $\|f\|_{\dot{B}_{p,r}^{\theta s_1+(1-\theta)s_2}}\leq \|f\|_{\dot{B}_{p,r}^{s_1}}^{\theta}\|f\|_{\dot{B}_{p,r}^{s_2}}^{1-\theta}$.

{\rm 4)} Algebraic properties: for $s>0$, $\dot{B}_{p,1}^s\cap L^\infty$ is an algebra.
\end{prop}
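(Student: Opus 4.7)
The plan is to verify each of the four properties using the Littlewood--Paley characterization of the Besov norm, together with Bernstein's inequality (Lemma \ref{le2.1}) and Bony's paraproduct decomposition. All four assertions are classical and the proofs reduce, on the $j$-th dyadic block, to frequency-localized estimates followed by summation on $\ell^r$.

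For item 1), I would apply the annulus version of Lemma \ref{le2.1}: since $\dot{\Delta}_j f$ has Fourier support in $2^j\mathcal C$, we have $\|\nabla \dot{\Delta}_j f\|_{L^p}\approx 2^{j}\|\dot{\Delta}_j f\|_{L^p}$. Multiplying by $2^{j(s-1)}$ and taking the $\ell^r(\mathbb{Z})$ norm in $j$ gives the two-sided estimate. For item 2), I would use the ball version of Bernstein's inequality, applied to each block (whose Fourier support is contained in a ball of radius $\sim 2^j$): $\|\dot{\Delta}_j f\|_{L^{p_2}}\lesssim 2^{jN(1/p_1-1/p_2)}\|\dot{\Delta}_j f\|_{L^{p_1}}$. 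Multiplying by $2^{j(s-N/p_1+N/p_2)}$ and invoking the elementary embedding $\ell^{r_1}\hookrightarrow \ell^{r_2}$ for $r_1\le r_2$ yields the conclusion.

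For item 3), I would write the pointwise identity
$$2^{j(\theta s_1+(1-\theta)s_2)}\|\dot{\Delta}_j f\|_{L^p}=\bigl(2^{j s_1}\|\dot{\Delta}_j f\|_{L^p}\bigr)^{\theta}\bigl(2^{j s_2}\|\dot{\Delta}_j f\|_{L^p}\bigr)^{1-\theta}$$
and then apply H\"older's inequality on $\ell^r$ with exponents $1/\theta$ and $1/(1-\theta)$. This is essentially a one-line computation once the Besov norm is unfolded.

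The main obstacle is item 4), for which I would use Bony's paraproduct decomposition
$$fg=T_f g+T_g f+R(f,g),\qquad T_f g=\sum_{j\in\mathbb{Z}}\dot{S}_{j-1}f\,\dot{\Delta}_j g,\quad R(f,g)=\sum_{|j-k|\le 1}\dot{\Delta}_j f\,\dot{\Delta}_k g.$$
Thanks to the support property \eqref{1.8}, each $\dot{\Delta}_j(T_f g)$ is localized near frequency $2^j$, so H\"older's inequality and the $L^\infty$ boundedness of $\dot{S}_{j-1}$ yield $\|T_f g\|_{\dot{B}^s_{p,1}}\lesssim \|f\|_{L^\infty}\|g\|_{\dot{B}^s_{p,1}}$, and symmetrically for $T_g f$. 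The remainder $R(f,g)$ is not frequency-localized, so I would use the low-frequency ball-Bernstein estimate $\|\dot{\Delta}_j R(f,g)\|_{L^p}\lesssim 2^{jN/p}\|\dot{\Delta}_j R(f,g)\|_{L^{p/2}}$ followed by H\"older in $L^p$; summing the resulting geometric series in $j$ requires precisely the hypothesis $s>0$, which is where that assumption is used. Combining the three bounds gives $\|fg\|_{\dot{B}^s_{p,1}}\lesssim \|f\|_{L^\infty}\|g\|_{\dot{B}^s_{p,1}}+\|g\|_{L^\infty}\|f\|_{\dot{B}^s_{p,1}}$, which is the algebra property.
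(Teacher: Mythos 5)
The paper states Proposition~\ref{pr2.1} without proof, citing it as a collection of classical facts (essentially from~\cite{bahouri2011fourier}), so there is no in-paper argument to compare against; what matters is whether your proofs are correct.

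Your treatments of items 1), 2), 3) are correct and standard: the annulus form of Lemma~\ref{le2.1} for derivation, the ball form plus $\ell^{r_1}\hookrightarrow\ell^{r_2}$ for the embedding, and H\"older on $\ell^r$ (with exponents $1/\theta$, $1/(1-\theta)$) applied to the factorization $2^{j(\theta s_1+(1-\theta)s_2)}\|\dot\Delta_j f\|_{L^p}=\bigl(2^{js_1}\|\dot\Delta_j f\|_{L^p}\bigr)^\theta\bigl(2^{js_2}\|\dot\Delta_j f\|_{L^p}\bigr)^{1-\theta}$ for interpolation.

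For item 4), the paraproduct pieces $T_f g$, $T_g f$ are handled correctly, but the remainder argument as you describe it does not close. Passing through $L^{p/2}$ via Bernstein and then H\"older gives, for $k\ge j-N_0$,
$$\|\dot\Delta_j(\dot\Delta_k f\,\widetilde{\dot\Delta}_k g)\|_{L^p}\lesssim 2^{jN/p}\,\|\dot\Delta_k f\|_{L^p}\,\|\widetilde{\dot\Delta}_k g\|_{L^p},$$
and after redistributing the factor $2^{jN/p}\le 2^{kN/p}\cdot 2^{(j-k)N/p}$ and summing, the best you obtain is a bound in terms of $\|f\|_{\dot B^{N/p}_{p,\infty}}\,\|g\|_{\dot B^s_{p,1}}$. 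This is \emph{not} controlled by $\|f\|_{L^\infty}$: Bernstein only gives $\|\dot\Delta_k f\|_{L^\infty}\lesssim 2^{kN/p}\|\dot\Delta_k f\|_{L^p}$, which is the wrong direction, and $L^\infty$ does not embed into $\dot B^{N/p}_{p,\infty}$. The clean and standard route needs no Bernstein at all: estimate each block with H\"older $L^\infty\times L^p\to L^p$, so that
$$\|\dot\Delta_j R(f,g)\|_{L^p}\lesssim \sum_{k\ge j-N_0}\|\dot\Delta_k f\|_{L^\infty}\,\|\widetilde{\dot\Delta}_k g\|_{L^p}\lesssim \|f\|_{L^\infty}\sum_{k\ge j-N_0}\|\widetilde{\dot\Delta}_k g\|_{L^p},$$
multiply by $2^{js}=2^{(j-k)s}2^{ks}$ and sum over $j$; the convolution kernel $\bigl(2^{ms}\mathbf{1}_{m\le N_0}\bigr)_{m\in\mathbb Z}$ is in $\ell^1$ precisely because $s>0$, yielding $\|R(f,g)\|_{\dot B^s_{p,1}}\lesssim\|f\|_{L^\infty}\|g\|_{\dot B^s_{p,1}}$. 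With that replacement the remainder of your argument for item 4) is fine.
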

\subsection{Product estimates} We recall a few nonlinear estimates in Besov spaces which may be derived
by using paradifferential calculus.  Introduced by  Bony in \cite{bony1981calcul}, the paraproduct between $f$
and $g$ is defined by
$$
T_f g=\sum_{k\in\mathbb{Z}}\dot{S}_{k-1}f\dot{\Delta}_k g,
$$
and the remainder is given by
$$
R(f,g)=\sum_{k\in\mathbb{Z}}\dot{\Delta}_k f\widetilde{\dot{\Delta}}_k g\,\,\,\,\,\,{\rm with}\,\,\,\,\,\,\widetilde{\dot{\Delta}}_k g\equ(\dot{\Delta}_{k-1}+\dot{\Delta}_{k}+\dot{\Delta}_{k+1})g.
$$
One has  the following so-called Bony's decomposition:
\be\label{2.3}
fg=T_g f+T_f g+R(f,g).
\ee

The paraproduct $T$ and the remainder $R$ operators satisfy the following continuous properties (see e.g. \cite{bahouri2011fourier}).
 \begin{prop}\label{pr2.2}
Suppose that $s\in\mathbb{R}, \sigma>0,$ and $1\leq p, p_1, p_2, r, r_1, r_2\leq \infty$.  Then we have
 \medskip

{\rm 1)} The paraproduct $T$ is a bilinear, continuous operator from $L^\infty\times\dot{B}_{p,r}^s$ to $\dot{B}_{p,r}^s$, and from $\dot{B}_{\infty, r_1}^{-\sigma}\times\dot{B}_{p,r_2}^s$ to $\dot{B}_{p,r}^{s-\sigma}$ with
$\frac{1}{r}=\min\{1, \frac{1}{r_1}+\frac{1}{r_2}\}$.

\medskip
{\rm 2)} The remainder $R$ is bilinear continuous from $\dot{B}_{p_1,r_1}^{s_1}\times\dot{B}_{p_2,r_2}^{s_2}$ to $\dot{B}_{p,r}^{s_1+s_2}$ with $s_1+s_2>0$, $\frac{1}{p}=\frac{1}{p_1}+\frac{1}{p_2}\leq 1$, and $\frac{1}{r}=\frac{1}{r_1}+\frac{1}{r_2}\leq 1$.

 \end{prop}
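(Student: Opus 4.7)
The plan is to exploit the quasi-orthogonality encoded in \eqref{1.8} (and an analogous spectral relation for the remainder) together with Bernstein's inequality and Young's convolution inequality for sequences on $\mathbb{Z}$. For the paraproduct, each dyadic piece $\dot{S}_{k-1}f\,\dot{\Delta}_k g$ is spectrally localized in an annulus of size $\sim 2^k$, so by \eqref{1.8} one has $\dot{\Delta}_j(\dot{S}_{k-1}f\,\dot{\Delta}_k g)=0$ unless $|j-k|\leq 4$. For the remainder, $\dot{\Delta}_k f\,\widetilde{\dot{\Delta}}_k g$ is spectrally supported in a ball of radius $\sim 2^k$, so $\dot{\Delta}_j(\dot{\Delta}_k f\,\widetilde{\dot{\Delta}}_k g)=0$ unless $k\geq j-N_0$ for some universal $N_0$.

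\textbf{Paraproduct, case $L^\infty\times\dot{B}^s_{p,r}\to\dot{B}^s_{p,r}$.} I would estimate $\|\dot{S}_{k-1}f\,\dot{\Delta}_k g\|_{L^p}\leq \|f\|_{L^\infty}\|\dot{\Delta}_k g\|_{L^p}$ by H\"older and the uniform $L^\infty$ bound on the low-frequency cutoff; summing the finitely many nonzero contributions $|j-k|\leq 4$, weighting by $2^{js}$ and taking the $\ell^r_j$ norm then directly yields the stated bound.

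\textbf{Paraproduct, case $\dot{B}^{-\sigma}_{\infty,r_1}\times\dot{B}^s_{p,r_2}\to\dot{B}^{s-\sigma}_{p,r}$.} The key step is to rewrite $\dot{S}_{k-1}f=\sum_{k'\leq k-2}\dot{\Delta}_{k'}f$ as a discrete convolution. Writing $a_{k'}=2^{-k'\sigma}\|\dot{\Delta}_{k'}f\|_{L^\infty}$, which is an $\ell^{r_1}$ sequence of norm $\|f\|_{\dot{B}^{-\sigma}_{\infty,r_1}}$, one has
\begin{equation*}
\|\dot{S}_{k-1}f\|_{L^\infty}\ \leq\ \sum_{k'\leq k-2}2^{k'\sigma}a_{k'}\ =\ 2^{k\sigma}\!\!\sum_{m\leq -2}\!\!2^{m\sigma}a_{k+m},
\end{equation*}
and the kernel $\{2^{m\sigma}\mathbf{1}_{m\leq -2}\}_m$ lies in $\ell^1$ precisely because $\sigma>0$. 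Young's inequality therefore gives $\|\dot{S}_{k-1}f\|_{L^\infty}\lesssim 2^{k\sigma}\tilde c_k\|f\|_{\dot{B}^{-\sigma}_{\infty,r_1}}$ with $(\tilde c_k)\in\ell^{r_1}$ of norm $\lesssim 1$. Combining this with $\|\dot{\Delta}_k g\|_{L^p}=2^{-ks}d_k\|g\|_{\dot{B}^s_{p,r_2}}$, $(d_k)\in\ell^{r_2}$, together with $|j-k|\leq 4$, one bounds
\begin{equation*}
2^{j(s-\sigma)}\|\dot{\Delta}_j T_f g\|_{L^p}\ \lesssim\ \sum_{|j-k|\leq 4}\tilde c_k\,d_k\,\|f\|_{\dot{B}^{-\sigma}_{\infty,r_1}}\|g\|_{\dot{B}^s_{p,r_2}};
\end{equation*}
taking the $\ell^r_j$ norm and invoking H\"older for sequences with $\frac{1}{r}=\min\{1,\frac{1}{r_1}+\frac{1}{r_2}\}$ closes the estimate.

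\textbf{Remainder and main obstacle.} H\"older's inequality with $\frac{1}{p}=\frac{1}{p_1}+\frac{1}{p_2}\leq 1$ gives $\|\dot{\Delta}_k f\,\widetilde{\dot{\Delta}}_k g\|_{L^p}\leq 2^{-k(s_1+s_2)}c_k d_k\|f\|_{\dot{B}^{s_1}_{p_1,r_1}}\|g\|_{\dot{B}^{s_2}_{p_2,r_2}}$ with $(c_k)\in\ell^{r_1}$, $(d_k)\in\ell^{r_2}$. Summing over $k\geq j-N_0$ and multiplying by $2^{j(s_1+s_2)}$ yields a one-sided convolution of the product sequence $(c_k d_k)$ with the kernel $\{2^{-m(s_1+s_2)}\mathbf{1}_{m\geq -N_0}\}_m$; this kernel is summable precisely because $s_1+s_2>0$. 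H\"older for sequences gives $(c_k d_k)\in\ell^r$ with $\frac{1}{r}=\frac{1}{r_1}+\frac{1}{r_2}\leq 1$, and Young's inequality then yields the claim. The most delicate accounting is the negative-regularity paraproduct step: one must convert the low-frequency partial sum $\dot{S}_{k-1}f$ back into an $\ell^{r_1}$-controlled sequence indexed by $k$, which is exactly where $\sigma>0$ and the min-rule on $r$ are unavoidable.
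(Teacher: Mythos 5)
Your proposal is correct and is essentially the standard proof of these paraproduct/remainder estimates; the paper itself does not prove Proposition \ref{pr2.2} but simply cites \cite{bahouri2011fourier}, where exactly this argument (spectral localization of each dyadic block, Bernstein/H\"older on each piece, then Young's convolution inequality for sequences, with $\sigma>0$ making the low-frequency kernel summable and $s_1+s_2>0$ making the remainder kernel summable) is carried out. No gaps worth flagging beyond the usual bookkeeping of the exact width of the spectral overlap, which you handle correctly.
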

 \medbreak
The following  non-classical product estimates enable us to establish the evolution of Besov norms at low frequencies
(see Lemma \ref{le3} below).
\begin{prop}\label{pr4.1}  {\rm (}\cite{bie2019optimal}{\rm )}
Assume that $1-\frac{N}{2}<\sigma_1\leq \frac{2N}{p}-\frac{N}{2} (N\geq 2)$ and $p$ satisfies \eqref{1.100}.
Then the following estimates  hold true:
\be\label{4.1}
\|fg\|_{\dot{B}_{2,\infty}^{-\sigma_1}}\lesssim\|f\|_{\dot{B}_{p,1}^{\frac{N}{p}}}\|g\|_{\dot{B}_{2,\infty}^{-\sigma_1}},
\ee
and
\be\label{4.2}
\|fg\|_{\dot{B}_{2,\infty}^{-\sigma_1}}^\ell\lesssim\|f\|_{\dot{B}_{p,1}^{\frac{N}{p}-1}}
\left(\|g\|_{\dot{B}_{p,\infty}^{-\sigma_1+\frac{N}{p}-\frac{N}{2}+1}}
+\|g\|_{\dot{B}_{p,\infty}^{-\sigma_1+\frac{2N}{p}-N+1}}\right).
\ee
\end{prop}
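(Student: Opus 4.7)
The plan is to apply Bony's decomposition
\[
fg = T_f g + T_g f + R(f,g)
\]
to both estimates and bound each of the three pieces separately, combining the paraproduct continuity of Proposition \ref{pr2.2}, H\"older's inequality on individual Littlewood--Paley blocks, and Bernstein's inequality (Lemma \ref{le2.1}) to reconcile the different Lebesgue exponents that arise.

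For \eqref{4.1}, since $\dot{B}_{p,1}^{N/p} \hookrightarrow L^\infty$, Proposition \ref{pr2.2}(1) immediately yields $\|T_f g\|_{\dot{B}_{2,\infty}^{-\sigma_1}} \lesssim \|f\|_{\dot{B}_{p,1}^{N/p}}\|g\|_{\dot{B}_{2,\infty}^{-\sigma_1}}$. For $T_g f$, I would localize $\dot{\Delta}_j(T_g f)$, apply H\"older in $L^2 = L^q \cdot L^p$ with $1/q = 1/2 - 1/p$, and invoke Bernstein to obtain $\|\dot{S}_{k-1}g\|_{L^q} \lesssim 2^{k(N/p+\sigma_1)}\|g\|_{\dot{B}_{2,\infty}^{-\sigma_1}}$; this step uses $\sigma_1 + N/p > 0$, a consequence of $\sigma_1 > 1 - N/2$ and $p \leq 2N/(N-2)$ in \eqref{1.100}. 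For the remainder I would exploit the near-diagonal identity $\dot{\Delta}_j R(f,g) = \sum_{k \gtrsim j}\dot{\Delta}_j(\dot{\Delta}_k f\,\widetilde{\dot{\Delta}}_k g)$ and use Bernstein on the output projection to pass from $L^r$ (with $1/r = 1/p + 1/2$) into $L^2$; the resulting series over $k \gtrsim j$ is summable because $\sigma_1 \leq N/p$, which follows from $\sigma_1 \leq 2N/p - N/2$ and $p \geq 2$, with the borderline equality case closed by direct $\ell^1$ absorption of $\|f\|_{\dot{B}_{p,1}^{N/p}}$.

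For \eqref{4.2}, the low-frequency cutoff on the output ($j \leq k_0$) is essential, because $f$ only lives in $\dot{B}_{p,1}^{N/p-1}$, which is not embedded in $L^\infty$. Again applying Bony, I would in each block use H\"older with the conjugate pair $(p, p^*)$, $p^* = 2p/(p-2)$, and then Bernstein to lift a low-frequency piece from $L^p$ to $L^{p^*}$; this is legitimate thanks to the bound $p \leq 4$ from \eqref{1.100}, which guarantees $p \leq p^*$. Performing the lift on $f$ (in $T_f g$ and in $R(f,g)$) produces the shift $N/p - N/2 + 1$ and therefore the first $g$-norm $\dot{B}_{p,\infty}^{-\sigma_1 + N/p - N/2 + 1}$; performing it instead on $g$ (in $T_g f$) produces the shift $2N/p - N/2$ and therefore the second $g$-norm $\dot{B}_{p,\infty}^{-\sigma_1 + 2N/p - N + 1}$. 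The truncation $j \leq k_0$ is precisely what absorbs the leftover $O(1)$ factor $2^{j(N/2 - N/p)}$ produced by the $T_g f$ estimate, while the regime $1 - N/2 < \sigma_1 \leq 2N/p - N/2$ is used simultaneously to ensure that the partial sums for $\dot{S}_{k-1}$ are controlled by the leading dyadic block and that the desired $2^{j\sigma_1}$ weight emerges from the remainder.

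\textbf{Main obstacle.} I expect the delicate step to be the remainder piece in \eqref{4.2} at the endpoint $\sigma_1 = 2N/p - N/2$: there the Bernstein factor $2^{jN(2/p - 1/2)}$ coming from the output projection must cancel the weight $2^{-j\sigma_1}$ hidden in the $\dot{B}_{2,\infty}^{-\sigma_1}$ norm \emph{exactly}, leaving an unweighted tail that is bounded only through the $\ell^1$ structure of $\|f\|_{\dot{B}_{p,1}^{N/p-1}}$. It is precisely this critical balance that forces the combined sharp restrictions $p \geq 2$, $p \leq 4$, $p \leq 2N/(N-2)$ in \eqref{1.100} together with $\sigma_1 \leq 2N/p - N/2$ from the hypothesis to appear simultaneously.
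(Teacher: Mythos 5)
The paper does not prove Proposition \ref{pr4.1}: it is quoted verbatim from \cite{bie2019optimal}, and no proof is given in this text, so there is no internal argument to compare yours against. Judging your plan on its own merits, the Bony decomposition strategy, the H\"older/Bernstein bookkeeping, the role of each constraint in \eqref{1.100} and of the bounds on $\sigma_1$, and your observation that the endpoint $\sigma_1=2N/p-N/2$ is rescued only by the $\ell^1$ structure of $\|f\|_{\dot B_{p,1}^{N/p-1}}$, are all correct.

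There is, however, an internal inconsistency in your treatment of $R(f,g)$ for \eqref{4.2} that needs to be resolved, because one of the two descriptions you give does not close. In the main body you propose to ``perform the lift on $f$'', that is, to bound $\|\dot\Delta_k f\,\widetilde{\dot\Delta}_k g\|_{L^2}\le\|\dot\Delta_k f\|_{L^{p^*}}\|\widetilde{\dot\Delta}_k g\|_{L^p}$ and then apply Bernstein to $\dot\Delta_k f$. This attaches the factor $2^{kN(2/p-1/2)}$ to the summation index $k$, and after reorganizing the tail you are left with $\sum_{k\ge j-N_0}2^{(k-j)\sigma_1}c_k$ with $(c_k)\in\ell^1$. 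For $\sigma_1>0$ (which is permitted: e.g.\ $N=3$, $p=3$, $\sigma_1$ up to $1/2$) this is not bounded uniformly in $j\le k_0$, since each fixed-$k$ weight $2^{(k-j)\sigma_1}$ blows up as $j\to-\infty$. The version that works is precisely the one you describe in your final ``main obstacle'' paragraph: apply H\"older first in $L^{p/2}$ (legitimate since $p\le4$ gives $p/2\le2$), then Bernstein on the \emph{output} block $\dot\Delta_j$, producing $2^{jN(2/p-1/2)}$ on the index $j$; this factor is bounded for $j\le k_0$ and cancels $2^{-j\sigma_1}$ exactly at the endpoint. In short, for the remainder in \eqref{4.2} the output-projection Bernstein and the input lift are not interchangeable because the Bernstein gain rides on different dyadic indices; only the former closes the argument, so you should align the body of your write-up with the obstacle paragraph.
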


From Bony's decomposition \eqref{2.3} and Proposition \ref{pr2.2}, we could as well infer the following product estimates:
\begin{col}\label{co2.1} {\rm(}\cite{bahouri2011fourier}, \cite{danchin2002zero}{\rm )} If $u\in\dot{B}_{p_1,1}^{s_1}$ and $v\in\dot{B}_{p_2,1}^{s_2}$ with $1\leq p_1\leq p_2\leq \infty,~s_1\leq \frac{N}{p_1},~s_2\leq \frac{N}{p_2}$ and $s_1+s_2>0$, then
$uv\in\dot{B}_{p_2,1}^{s_1+s_2-\frac{N}{p_1}}$ and there exists a constant $C$, depending only on $N, s_1, s_2, p_1$ and $p_2$, such that
\be\nonumber
\|uv\|_{\dot{B}_{p_2,1}^{s_1+s_2-\frac{N}{p_1}}}\leq C\|u\|_{\dot{B}_{p_1,1}^{s_1}}
\|v\|_{\dot{B}_{p_2,1}^{s_2}}.
\ee
\end{col}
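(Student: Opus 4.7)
The plan is to exploit Bony's decomposition
$$uv = T_u v + T_v u + R(u,v)$$
and bound each of the three pieces separately using the continuity properties collected in Proposition~\ref{pr2.2}, together with the Sobolev embedding of Proposition~\ref{pr2.1}~(item 2) whenever the integrability or regularity exponents need to be adjusted to land in the target space $\dot B^{s_1+s_2-N/p_1}_{p_2,1}$.

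For the first paraproduct $T_u v$, I would first embed the factor $u$ into an $L^\infty$-based Besov space: since $s_1\le N/p_1$, one has $\dot B^{s_1}_{p_1,1}\hookrightarrow \dot B^{s_1-N/p_1}_{\infty,1}$, which places $u$ in $\dot B^{-\sigma}_{\infty,1}$ with $\sigma=N/p_1-s_1\ge 0$. In the strict case $\sigma>0$, the second assertion of Proposition~\ref{pr2.2}(1) yields $T_u v\in\dot B^{s_2-\sigma}_{p_2,1}=\dot B^{s_1+s_2-N/p_1}_{p_2,1}$ with the required product bound. In the endpoint case $\sigma=0$, i.e.\ $s_1=N/p_1$, one invokes the embedding $\dot B^{N/p_1}_{p_1,1}\hookrightarrow L^\infty$ and then the first assertion of Proposition~\ref{pr2.2}(1), which yields $T_u v\in\dot B^{s_2}_{p_2,1}$; this is exactly the target space since $s_2=s_1+s_2-N/p_1$ in that case.

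For the symmetric paraproduct $T_v u$, a parallel argument uses the assumption $s_2\le N/p_2$ to embed $v$ into $\dot B^{s_2-N/p_2}_{\infty,1}$ (or $L^\infty$ when $s_2=N/p_2$) and then applies Proposition~\ref{pr2.2}(1) to deduce $T_v u\in\dot B^{s_1+s_2-N/p_2}_{p_1,1}$. A further Sobolev embedding, permitted because $p_1\le p_2$, costs exactly $N/p_1-N/p_2\ge 0$ units of regularity and thus carries this into $\dot B^{s_1+s_2-N/p_1}_{p_2,1}$. For the remainder, the condition $s_1+s_2>0$ allows Proposition~\ref{pr2.2}(2) to give $R(u,v)\in\dot B^{s_1+s_2}_{p,1}$ with $1/p=1/p_1+1/p_2$, after which the Sobolev embedding lowers regularity by $N/p-N/p_2=N/p_1$ to reach $\dot B^{s_1+s_2-N/p_1}_{p_2,1}$. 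Summing the three contributions produces the claimed inequality.

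The main technical obstacle I expect is a careful handling of the endpoint situations $s_1=N/p_1$ and $s_2=N/p_2$, where the $\dot B^{-\sigma}_{\infty,1}$ route degenerates and must be replaced by the $L^\infty$ version of the paraproduct estimate; one must check that both paths deliver continuity into the same target Besov space, which is what ultimately forces the regularity index $s_1+s_2-N/p_1$ in the conclusion. A secondary point to verify is the admissibility of the remainder step, namely that the exponent $p$ arising from $1/p=1/p_1+1/p_2$ satisfies $p\ge 1$, which fits under the hypotheses of Proposition~\ref{pr2.2}(2) and does not require any additional assumption beyond those already placed on $p_1$ and $p_2$.
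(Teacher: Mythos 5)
Your Bony decomposition and the treatment of the two paraproducts, including the endpoint cases $s_1=N/p_1$ and $s_2=N/p_2$, follow exactly the route the paper intends: the paper offers no proof of Corollary \ref{co2.1}, asserting only that it follows from \eqref{2.3} and Proposition \ref{pr2.2} and citing the references. Those parts of your argument are correct (one cosmetic remark: when applying the remainder and the second paraproduct estimates with both third indices equal to $1$, the stated rule $\frac{1}{r}=\frac{1}{r_1}+\frac{1}{r_2}\le 1$ forces you to first relax one factor to third index $\infty$, e.g.\ $v\in\dot{B}_{p_2,\infty}^{s_2}$, before summing; otherwise $\frac1r=2$).

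The genuine gap is in your final paragraph, where you claim that the exponent $p$ defined by $\frac{1}{p}=\frac{1}{p_1}+\frac{1}{p_2}$ automatically satisfies $p\ge 1$ under the stated hypotheses. It does not: $1\le p_1\le p_2\le\infty$ permits, for instance, $p_1=p_2=1$, for which $\frac{1}{p_1}+\frac{1}{p_2}=2$ and Proposition \ref{pr2.2}(2) is inapplicable. This is not merely a technicality you can route around: the blocks $\dot{\Delta}_k u\,\widetilde{\dot{\Delta}}_k v$ of the remainder are supported in balls rather than annuli, so after using Bernstein to repair the integrability one needs the resulting regularity index to be \emph{positive}, and $s_1+s_2>0$ alone no longer guarantees this when $\frac{1}{p_1}+\frac{1}{p_2}>1$. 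This is precisely why the cited sources state the product law with the additional hypothesis $\frac{1}{p_1}+\frac{1}{p_2}\le 1$ (or, in the general form, $s_1+s_2>N(\frac{1}{p_1}+\frac{1}{p_2}-1)$), a condition the paper's statement silently omits. In every application made in the paper one has $p_1,p_2\in\{2,p\}$ with $p\ge 2$, so the condition holds and your argument then goes through verbatim; but as a proof of the corollary exactly as stated, the remainder step is incomplete, and you should either add the hypothesis $\frac{1}{p_1}+\frac{1}{p_2}\le 1$ or supply a separate argument for the complementary range.
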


\begin{col}\label{co2.2}
 Assume that $1-\frac{N}{2}<\sigma_1\leq\frac{2N}{p}-\frac{N}{2}\,(N\geq 2)$ and  $p$ fulfills \eqref{1.100}, then we have
\be\nonumber
\|fg\|_{\dot{B}_{p,\infty}^{-\sigma_1+\frac{N}{p}-\frac{N}{2}+1}}\lesssim\|f\|_{\dot{B}_{p,1}^{\frac{N}{p}}}
\|g\|_{\dot{B}_{p,\infty}^{-\sigma_1+\frac{N}{p}-\frac{N}{2}+1}},
\ee
and
\be\nonumber
\|fg\|_{\dot{B}_{p,\infty}^{-\sigma_1+\frac{2N}{p}-N+1}}\lesssim\|f\|_{\dot{B}_{p,1}^{\frac{N}{p}}}
\|g\|_{\dot{B}_{p,\infty}^{-\sigma_1+\frac{2N}{p}-N+1}}.
\ee
\end{col}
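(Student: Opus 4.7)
The plan is to apply Bony's decomposition
$$fg = T_f g + T_g f + R(f,g),$$
and to bound each of the three pieces by invoking Proposition \ref{pr2.2} together with the embedding $\dot{B}_{p,1}^{N/p}\hookrightarrow L^\infty$ available for $f$. Both inequalities in the statement have exactly the same structure---the target space $\dot{B}_{p,\infty}^s$ coincides with the space already holding $g$---so one unified argument handles both cases, with the two exponents $s_1:=-\sigma_1+\tfrac{N}{p}-\tfrac{N}{2}+1$ and $s_2:=-\sigma_1+\tfrac{2N}{p}-N+1$ differing only in the numerical verifications at the end. From $1-\tfrac{N}{2}<\sigma_1\leq \tfrac{2N}{p}-\tfrac{N}{2}$ one reads off $s_1\in[1-\tfrac{N}{p},\tfrac{N}{p})$ and $s_2\in[1-\tfrac{N}{2},\tfrac{2N}{p}-\tfrac{N}{2})$, and these ranges are what power the three estimates below.

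For $T_f g$, the first part of Proposition \ref{pr2.2} combined with $\dot{B}_{p,1}^{N/p}\hookrightarrow L^\infty$ gives immediately
$$\|T_f g\|_{\dot{B}_{p,\infty}^s}\lesssim \|f\|_{L^\infty}\|g\|_{\dot{B}_{p,\infty}^s}\lesssim \|f\|_{\dot{B}_{p,1}^{N/p}}\|g\|_{\dot{B}_{p,\infty}^s}.$$
For the symmetric paraproduct $T_g f$, I would first apply the Sobolev embedding $\dot{B}_{p,\infty}^s\hookrightarrow \dot{B}_{\infty,\infty}^{s-N/p}$ and then invoke the second part of Proposition \ref{pr2.2} with $\sigma=\tfrac{N}{p}-s$. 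The requirement $\sigma>0$ amounts to $s<\tfrac{N}{p}$, which holds for both $s_1$ and $s_2$ thanks to the strict inequality $\sigma_1>1-\tfrac{N}{2}$, and the result reads
$$\|T_g f\|_{\dot{B}_{p,\infty}^s}\lesssim \|g\|_{\dot{B}_{\infty,\infty}^{s-N/p}}\|f\|_{\dot{B}_{p,1}^{N/p}}\lesssim \|g\|_{\dot{B}_{p,\infty}^s}\|f\|_{\dot{B}_{p,1}^{N/p}}.$$

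For the remainder $R(f,g)$, I would take $p_1=p_2=p$ and $(r_1,r_2)=(1,\infty)$ in the second item of Proposition \ref{pr2.2}, which is admissible since $p\geq 2$ ensures $\tfrac{1}{p}+\tfrac{1}{p}\leq 1$. This places $R(f,g)$ in $\dot{B}_{p/2,\infty}^{N/p+s}$, and a further Sobolev embedding $\dot{B}_{p/2,\infty}^{N/p+s}\hookrightarrow \dot{B}_{p,\infty}^s$ then brings us back to the target space. The positivity condition $\tfrac{N}{p}+s>0$ is automatic for $s_1$, where $\tfrac{N}{p}+s_1\geq 1$, and needs to be checked for $s_2$, where $\tfrac{N}{p}+s_2\geq \tfrac{N}{p}+1-\tfrac{N}{2}$; under assumption \eqref{1.100} this last quantity is nonnegative.

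The only step I expect to require extra care is this last positivity check: at the borderline $p=\tfrac{2N}{N-2}$ combined with $\sigma_1=\tfrac{2N}{p}-\tfrac{N}{2}$ one has $\tfrac{N}{p}+s_2=0$, so Proposition \ref{pr2.2} cannot be applied to $R(f,g)$ with the indices $(p,p)$ directly. To handle this edge case I would first trade some integrability from $f$ via $\dot{B}_{p,1}^{N/p}\hookrightarrow \dot{B}_{\infty,1}^{0}$ and redistribute exponents so that the remainder regains a strictly positive total regularity, exploiting the $\ell^1$ summation index carried by $f$. Apart from this borderline scenario, the whole argument reduces to routine paradifferential bookkeeping, verifying case by case that the exponents fall into the ranges prescribed by Proposition \ref{pr2.2}.
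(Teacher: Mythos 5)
Your Bony-decomposition argument is precisely what the paper intends (the paper itself offers no proof beyond the one-line remark ``From Bony's decomposition \eqref{2.3} and Proposition \ref{pr2.2}, we could as well infer\ldots''), and you carry it out more carefully than the paper does: the verifications for $T_f g$ and $T_g f$, and the remainder estimate away from the borderline, are all correct. You are also right to flag the endpoint $p=\frac{2N}{N-2}$, $\sigma_1=\frac{2N}{p}-\frac{N}{2}$ (reachable for $N\geq 4$ under \eqref{1.100}), where $\frac{N}{p}+s_2=0$ and Proposition \ref{pr2.2}(2) as stated does not apply. This is a genuine gap that the paper glosses over.

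However, the fix you propose for this borderline does not work. Replacing $f\in\dot{B}_{p,1}^{N/p}$ by $f\in\dot{B}_{\infty,1}^{0}$ in the remainder term lowers the total regularity: with $g\in\dot{B}_{p,\infty}^{s_2}$ one would need $0+s_2>0$, but $s_2\leq 1-\frac{N}{2}\leq 0$, so the situation is strictly worse, not better. The same obstruction persists under any interpolation $f\in\dot{B}_{q,1}^{N/q}$ with $q\geq p$: the resulting condition $\frac{N}{q}+s_2>0$ forces $q<p$, which is incompatible with the direction of the embedding. The correct remedy is to invoke the standard endpoint variant of the remainder estimate (see, e.g., Theorem 2.52 in \cite{bahouri2011fourier}): if $s_1+s_2=0$, $\frac{1}{p}=\frac{1}{p_1}+\frac{1}{p_2}\leq 1$ and $\frac{1}{r_1}+\frac{1}{r_2}\geq 1$, then $R$ maps $\dot{B}_{p_1,r_1}^{s_1}\times\dot{B}_{p_2,r_2}^{s_2}$ continuously into $\dot{B}_{p,\infty}^{0}$. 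Here $r_1=1$, $r_2=\infty$ gives $\frac{1}{r_1}+\frac{1}{r_2}=1$, so $R(f,g)\in\dot{B}_{p/2,\infty}^{0}\hookrightarrow\dot{B}_{p,\infty}^{-N/p}=\dot{B}_{p,\infty}^{s_2}$ at the borderline, closing the argument. This variant is not contained in the paper's Proposition \ref{pr2.2}, so either the proposition should be stated in the sharper form, or the borderline should be cited separately.
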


We also need the following  composition lemma (see \cite{bahouri2011fourier,danchin2000global, runst1996sobolev}).
\begin{prop}\label{pra.4}

Let $F:\mathbb{R}\rightarrow \mathbb{R}$ be smooth with $F(0)=0$. For all $1\leq p,r\leq \infty$ and $s>0$, it holds that $F(u)\in \dot{B}_{p,r}^s\cap L^\infty$ for $u\in\dot{B}_{p,r}^s\cap L^\infty$, and
$$
\|F(u)\|_{\dot{B}_{p,r}^s}\leq C\|u\|_{\dot{B}_{p,r}^s}
$$
with $C$ depending only on $\|u\|_{L^\infty}$, $F^\prime$ (and higher derivatives), $s, p$ and $N$.
\end{prop}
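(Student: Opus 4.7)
This is a classical composition estimate; I follow the approach of Runst--Sickel and Bahouri--Chemin--Danchin. The $L^\infty$ part $\|F(u)\|_{L^\infty}\le \|F'\|_{L^\infty([-A,A])}\,\|u\|_{L^\infty}$, with $A:=\|u\|_{L^\infty}$, is immediate from $F(0)=0$ and the mean value theorem, so I focus on the Besov estimate.

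The key device is the telescoping identity, convergent in $\mathcal{Y}'(\mathbb{R}^N)$:
\begin{equation*}
F(u) \;=\; \sum_{j\in\mathbb{Z}}\bigl(F(\dot{S}_{j+1}u)-F(\dot{S}_j u)\bigr)\;=\;\sum_{j\in\mathbb{Z}} m_j\,\dot{\Delta}_j u,\qquad m_j:=\int_0^1 F'\bigl(\dot{S}_j u+\tau\dot{\Delta}_j u\bigr)\,d\tau.
\end{equation*}
Since $\dot{S}_j u$ and $\dot{\Delta}_j u$ are uniformly bounded by $C\|u\|_{L^\infty}$ (each $\dot{\Delta}_k$ is bounded on $L^\infty$ uniformly in $k$), I obtain $\|m_j\|_{L^\infty}\le \|F'\|_{L^\infty([-CA,CA])}$ uniformly in $j$. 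Moreover, because the argument of $F'$ is frequency-localized at scale $2^j$, Bernstein's inequality combined with Fa\`a di Bruno's chain rule yields, for every multi-index $\alpha$,
\begin{equation*}
\|\partial^\alpha m_j\|_{L^\infty}\;\le\; C_{\alpha}\,2^{|\alpha|j},
\end{equation*}
where $C_\alpha$ depends only on $\|u\|_{L^\infty}$ and on $F^{(k)}|_{[-CA,CA]}$ for $k\le |\alpha|+1$.

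To estimate $\|F(u)\|_{\dot{B}^s_{p,r}}$, I fix $k\in\mathbb{Z}$ and a fixed integer $N_0$, and split
\begin{equation*}
\dot{\Delta}_k F(u)\;=\;\sum_{j\ge k-N_0}\dot{\Delta}_k(m_j\dot{\Delta}_j u)\;+\;\sum_{j< k-N_0}\dot{\Delta}_k(m_j\dot{\Delta}_j u).
\end{equation*}
For the high-frequency tail $j\ge k-N_0$ I use the crude bound $\|\dot{\Delta}_k(m_j\dot{\Delta}_j u)\|_{L^p}\lesssim \|\dot{\Delta}_j u\|_{L^p}$; weighting by $2^{ks}$ produces a discrete convolution with kernel summable in $\ell^1(\mathbb{Z})$ because $s>0$, hence an $\ell^r$-bound controlled by $\|u\|_{\dot{B}^s_{p,r}}$. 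For the low-frequency tail $j<k-N_0$ I use Bernstein's inequality (applied to $\dot{\Delta}_k$) with an order $M>s$ together with the Leibniz rule to obtain
\begin{equation*}
\|\dot{\Delta}_k(m_j\dot{\Delta}_j u)\|_{L^p}\;\lesssim\;2^{-Mk}\|\nabla^M(m_j\dot{\Delta}_j u)\|_{L^p}\;\lesssim\;2^{M(j-k)}\|\dot{\Delta}_j u\|_{L^p},
\end{equation*}
which again produces an $\ell^1$ convolution since $M-s>0$. Taking the $\ell^r(\mathbb{Z})$-norm in $k$ of the sum of the two tails yields $\|F(u)\|_{\dot{B}^s_{p,r}}\lesssim \|u\|_{\dot{B}^s_{p,r}}$, with a constant of the claimed form.

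The main obstacle is the Fa\`a di Bruno bookkeeping behind $\|\partial^\alpha m_j\|_{L^\infty}\lesssim 2^{|\alpha|j}$: one must expand $\partial^\alpha F'(\dot{S}_j u+\tau\dot{\Delta}_j u)$ as a sum of products of the form $F^{(k+1)}(\dot{S}_j u+\tau\dot{\Delta}_j u)\prod_i \partial^{\beta_i}(\dot{S}_j u+\tau\dot{\Delta}_j u)$ with $\sum_i|\beta_i|=|\alpha|$, apply Bernstein to bound each $\|\partial^{\beta_i}\dot{S}_j u\|_{L^\infty}\lesssim 2^{|\beta_i|j}\|u\|_{L^\infty}$, and verify that all derivatives of $F$ that appear remain bounded on the relevant interval. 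Once this is in hand, the remaining summations are routine Young-type estimates, cleanly separated by the hypothesis $s>0$ on one side and the free choice $M>s$ on the other.
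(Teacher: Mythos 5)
The paper does not prove Proposition~\ref{pra.4}; it merely cites it from Bahouri--Chemin--Danchin, Danchin, and Runst--Sickel. Your sketch reproduces the classical Meyer ``first linearization'' argument that those references use: writing $F(u)=\sum_j m_j\dot{\Delta}_j u$ with $m_j=\int_0^1F'(\dot S_j u+\tau\dot{\Delta}_j u)\,d\tau$, proving $\|\partial^\alpha m_j\|_{L^\infty}\lesssim 2^{|\alpha|j}$ by Fa\`a di Bruno plus Bernstein (this is exactly where the $L^\infty$ control of $u$ enters, so that all derivatives of $F$ are evaluated on a fixed compact interval), and then splitting $\dot{\Delta}_kF(u)$ into a tail $j\geq k-N_0$ (summable since $s>0$) and a tail $j<k-N_0$ (summable after a high-order Bernstein estimate on $\dot{\Delta}_k$, with any $M>s$). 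The summation steps and the final Young-type convolution are correct.

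One point you should not leave implicit in the \emph{homogeneous} setting is the convergence of the telescoping identity. The partial sum from $-N$ to $M$ equals $F(\dot S_{M+1}u)-F(\dot S_{-N}u)$; unlike the nonhomogeneous case, $\dot S_{-N}u$ does not tend to $0$ as $N\to\infty$ when $u\in L^\infty$. What saves the identity is that $\|\nabla\dot S_{-N}u\|_{L^\infty}\lesssim 2^{-N}\|u\|_{L^\infty}\to 0$ while $\|\dot S_{-N}u\|_{L^\infty}$ stays bounded, so $\dot S_{-N}u$ converges locally uniformly to a constant, hence $F(\dot S_{-N}u)$ converges to $F(\mathrm{const})$, which vanishes in $\mathcal{Y}'=\mathcal{S}'/\mathcal{P}$. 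Without this remark the identity $F(u)=\sum_j m_j\dot{\Delta}_j u$ is only formal in $\mathcal{Y}'$; it is worth stating, since $\dot{\Delta}_k$ annihilates polynomials and therefore only the equivalence class in $\mathcal{Y}'$ is relevant to the estimate. The rest of your outline, including the Leibniz expansion $\nabla^M(m_j\dot{\Delta}_ju)=\sum_{a+b=M}\binom{M}{a}\nabla^am_j\otimes\nabla^b\dot{\Delta}_ju$ with $\|\nabla^am_j\|_{L^\infty}\lesssim2^{aj}$ and $\|\nabla^b\dot{\Delta}_ju\|_{L^p}\lesssim2^{bj}\|\dot{\Delta}_ju\|_{L^p}$, is sound.
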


At last,  we present the optimal regularity estimates for the heat equation (see e.g. \cite{bahouri2011fourier}).
\begin{prop}\label{pra.6}
Let $\sigma\in\mathbb{R},\,\, (p,r)\in [1,\infty]^2$ and $1\leq \rho_2\leq \rho_1\leq \infty$. Let $u$
satisfy
\be\nonumber
\left\{\begin{array}{ll}\medskip\D
\partial_t u-\mu \Delta u=f,\\ \D
u|_{t=0}=u_0.
\end{array}
\right.
\ee
Then for all $T>0$, the following a prior estimate is satisfied:
\be\nonumber
\mu^{\frac{1}{\rho_1}}\|u\|_{\widetilde{L}_T^{\rho_1}(\dot{B}_{p,r}^{\sigma+\frac{2}{\rho_1}})}
\lesssim \|u_0\|_{\dot{B}_{p,r}^\sigma}+\mu^{\frac{1}{\rho_2}-1}
\|f\|_{\widetilde{L}_T^{\rho_2}(\dot{B}_{p,r}^{\sigma-2+\frac{2}{\rho_2}})}.
\ee
\end{prop}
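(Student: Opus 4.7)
The plan is to reduce the estimate to a dyadic block-wise version via the Littlewood-Paley calculus, then combine Duhamel's formula with the smoothing effect of the heat semigroup on Fourier-localized data. First, applying $\dot{\Delta}_j$ to the equation and using Duhamel gives
\begin{equation*}
\dot{\Delta}_j u(t) = e^{t\mu\Delta}\dot{\Delta}_j u_0 + \int_0^t e^{(t-s)\mu\Delta}\dot{\Delta}_j f(s)\,ds.
\end{equation*}
Because $\dot{\Delta}_j u_0$ and $\dot{\Delta}_j f$ have Fourier support in an annulus of size $2^j$, the key ingredient is the Bernstein-type spectral estimate
\begin{equation*}
\|e^{t\mu\Delta}\dot{\Delta}_j v\|_{L^p}\lesssim e^{-c\mu 2^{2j}t}\|\dot{\Delta}_j v\|_{L^p},
\end{equation*}
valid for some universal $c>0$. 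This is the step I would treat as the backbone of the argument; the inequality follows from a convolution with a Schwartz function obtained from $\varphi(2^{-j}\xi)e^{-t\mu|\xi|^2}$ after rescaling, together with Young's inequality in $L^p$.

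Next, taking the $L^p$ norm of the Duhamel identity and using the spectral estimate yields
\begin{equation*}
\|\dot{\Delta}_j u(t)\|_{L^p}\lesssim e^{-c\mu 2^{2j}t}\|\dot{\Delta}_j u_0\|_{L^p} + \int_0^t e^{-c\mu 2^{2j}(t-s)}\|\dot{\Delta}_j f(s)\|_{L^p}\,ds.
\end{equation*}
I would then apply the $L^{\rho_1}(0,T)$ norm in time. The first term gives $(\mu 2^{2j})^{-1/\rho_1}\|\dot{\Delta}_j u_0\|_{L^p}$. For the integral term, which is a convolution in time, Young's convolution inequality with exponents satisfying $1+\tfrac{1}{\rho_1}=\tfrac{1}{r}+\tfrac{1}{\rho_2}$ is the natural tool; this forces $r\geq 1$, which is exactly where the hypothesis $\rho_2\leq\rho_1$ is used. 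One obtains a factor $(\mu 2^{2j})^{1/\rho_1-1/\rho_2-1}$ in front of $\|\dot{\Delta}_j f\|_{L_T^{\rho_2}(L^p)}$.

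Finally, I multiply by $2^{j(\sigma+2/\rho_1)}$ and take the $\ell^r$ norm in $j$. The powers of $2^{2j}$ combine with the Besov weight: the homogeneous term becomes $\mu^{-1/\rho_1}2^{j\sigma}\|\dot{\Delta}_j u_0\|_{L^p}$, while the inhomogeneous term becomes $\mu^{1/\rho_2-1-1/\rho_1}2^{j(\sigma-2+2/\rho_2)}\|\dot{\Delta}_j f\|_{L_T^{\rho_2}(L^p)}$. Multiplying through by $\mu^{1/\rho_1}$ delivers exactly the claimed inequality, with the Chemin-Lerner norms appearing on both sides because we commuted the $L^{\rho_1}$-in-time norm and the $\ell^r$-in-$j$ norm in the right order. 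The main obstacle, as noted, is the spectrally-localized heat smoothing estimate; the rest is bookkeeping with Young's inequality and keeping track of powers of $\mu$ and $2^j$.
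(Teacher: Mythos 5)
Your argument is correct and is the standard proof — the paper simply cites this result from Bahouri--Chemin--Danchin, whose proof is exactly the one you outline: Duhamel plus the spectrally localized smoothing estimate $\|e^{t\mu\Delta}\dot{\Delta}_j v\|_{L^p}\lesssim e^{-c\mu 2^{2j}t}\|\dot{\Delta}_j v\|_{L^p}$, then Young's inequality in time and $\ell^r$ summation, with the Chemin--Lerner norm arising naturally because the $L^{\rho_1}_T$-norm is taken before the $\ell^r$-sum. One small typo: the intermediate exponent should read $(\mu 2^{2j})^{1/\rho_2-1/\rho_1-1}$ rather than $(\mu 2^{2j})^{1/\rho_1-1/\rho_2-1}$ (your final bookkeeping $\mu^{1/\rho_2-1-1/\rho_1}$ already uses the correct value, so the conclusion is unaffected).
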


\section{Estimation of $L^2$-type Besov norms at low frequencies}\label{s:4}
This section  establishes  $L^2$-type Besov norms at low frequencies, which is the main ingredient in proving Theorem \ref{th2}. Firstly, we recall some properties of compressible viscoelastic flows, which have been verified in \cite{qian2010global}.
\begin{prop}\label{pr1}
  The density $\rho$ and the deformation gradient $\bF$ of system \eqref{1.1} fulfill the following equalities:
  \begin{equation}\label{4.3}
  \nabla\cdot(\rho\bF^T)={\bf 0}\,\,\,\,\,\,{\rm and}\,\,\,\,\,\,F^{lk}\partial_lF^{ij}-F^{lj}\partial_lF^{ik}=0,
  \end{equation}
if the initial data $(\rho_0, \bF_0)$ satisfies
\begin{equation}\label{4.4}
  \nabla\cdot(\rho_0\bF_0^T)={\bf 0}\,\,\,\,\,\,{\rm and}\,\,\,\,\,\,F_0^{lk}\partial_lF_0^{ij}-F_0^{lj}\partial_lF_0^{ik}=0.
  \end{equation}
\end{prop}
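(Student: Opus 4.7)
The plan is to view \eqref{4.3} as two differential constraints carried by the flow of $\bu$, derive for them a closed linear first-order transport system, and conclude by uniqueness, since the constraints vanish initially thanks to \eqref{4.4}. Writing $D_t:=\partial_t+u^k\partial_k$ for the material derivative, I introduce
\[
V^j:=\partial_i(\rho F^{ji}),\qquad G^{ijk}:=F^{lk}\partial_l F^{ij}-F^{lj}\partial_l F^{ik},
\]
so \eqref{4.4} reads $V|_{t=0}=0$ and $G|_{t=0}=0$. From \eqref{1.1}$_3$ one has $D_tF^{ij}=\partial_m u^i\,F^{mj}$, and the commutator identity $[D_t,\partial_l]=-\partial_l u^m\,\partial_m$ governs the transport of spatial derivatives of $\bF$.

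First I handle $G$. Applying $D_t$ directly to $G^{ijk}$ and expanding via Leibniz, the two second-derivative-in-$\bu$ contributions $F^{lk}F^{mj}\partial_l\partial_m u^i$ and $F^{lj}F^{mk}\partial_l\partial_m u^i$ cancel by the symmetry of mixed partials combined with the swap $l\leftrightarrow m$; the surviving first-order-in-$\bu$ terms reorganize, after relabeling, into the clean identity
\[
D_tG^{ijk}=\partial_m u^i\,G^{mjk}.
\]
Since $\nabla\bu$ is bounded on the lifespan by the regularity given by Theorem \ref{th1.1}, this linear transport ODE along characteristics, with zero initial datum, admits only the trivial solution, hence $G\equiv 0$.

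For the first identity, I combine the continuity equation $\partial_t\rho+\partial_k(\rho u^k)=0$ with $D_tF^{ji}=\partial_k u^j\,F^{ki}$ to obtain $\partial_t(\rho F^{ji})=-\partial_k(\rho u^k F^{ji})+\rho F^{ki}\partial_k u^j$. Taking $\partial_i$ and rearranging yields the conservation form
\[
\partial_t V^j+\partial_k(u^k V^j)=\partial_l\bigl[\rho\bigl(F^{ml}\partial_m u^j-F^{jm}\partial_m u^l\bigr)\bigr].
\]
Expanding the right-hand side by Leibniz, the identity $\partial_l(\rho F^{ml})=V^m$ produces contributions linear in $V$, while the residual second-order-in-$\bu$ terms of the form $\rho F^{ml}\partial_l\partial_m u^j$ and $\rho F^{jm}\partial_l\partial_m u^l$ are removed by the joint use of $\partial_l\partial_m=\partial_m\partial_l$ and the pointwise identity $F^{lk}\partial_l F^{ij}=F^{lj}\partial_l F^{ik}$ supplied by the already established $G\equiv 0$. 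The outcome is a closed linear transport system $D_t V=A(t,x)\,V$ with smooth matrix coefficients built from $\bu,\nabla\bu,\rho,\bF$, so that $V|_{t=0}=0$ forces $V\equiv 0$ by uniqueness.

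The main difficulty is precisely this algebraic closure of the $V$-equation: the raw computation produces asymmetric second-derivative-in-$\bu$ sources (for instance $\rho F^{ji}\partial_i\partial_k u^k$ versus $\rho F^{ki}\partial_i\partial_k u^j$) whose mutual cancellation requires the simultaneous use of the symmetry of mixed partials and the $G$-constraint; by contrast the $G$-equation is self-contained because its two quadratic-in-$\bF$ second-derivative terms already cancel pointwise by symmetry alone. Once this closure is in hand, the final step is the standard Grönwall argument for first-order linear transport equations with smooth coefficients.
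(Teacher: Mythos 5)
Your treatment of the constraint $G^{ijk}=F^{lk}\partial_l F^{ij}-F^{lj}\partial_l F^{ik}$ is correct: using $D_tF^{ij}=\partial_m u^i F^{mj}$ and $[D_t,\partial_l]=-\partial_l u^m\partial_m$, the two second-derivative terms $F^{lk}F^{mj}\partial_l\partial_m u^i$ and $F^{lj}F^{mk}\partial_l\partial_m u^i$ do cancel under $l\leftrightarrow m$, and the first-order terms reassemble into $D_tG^{ijk}=\partial_m u^i\,G^{mjk}$. The paper itself only cites Qian--Zhang and gives no proof, so this part stands on its own merits.

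The $V$-part, however, has a genuine gap, traceable to an index error in the very definition. You set $V^j:=\partial_i(\rho F^{ji})$, which is the divergence of $\rho\bF$ taken over the \emph{second} index of $\bF$, i.e.\ $\nabla\cdot(\rho\bF)$. The constraint in \eqref{4.3} (and as used in \eqref{4.5}) is $\nabla\cdot(\rho\bF^T)=0$, that is $\partial_j(\rho F^{ji})=0$ with the sum over the \emph{first} index $j$. These are different tensors for a non-symmetric $\bF$, so your $V|_{t=0}=0$ is not what \eqref{4.4} provides. Worse, the claimed cancellation of the residual source terms cannot work: after expanding the right-hand side of your conservation law, the second-order-in-$\bu$ contributions $\rho F^{ml}\partial_l\partial_m u^j$ and $\rho F^{jm}\partial_l\partial_m u^l$ contain no factor of $\nabla\bF$ at all, so the identity $F^{lk}\partial_l F^{ij}=F^{lj}\partial_l F^{ik}$ is irrelevant to them. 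Concretely, at a point where $\rho=1$ and $\bF=\bI$ (so that $G\equiv 0$ holds trivially), those two terms reduce to $\Delta u^j$ and $\partial_j\operatorname{div}\bu$, which are not equal for generic $\bu$. Hence your $V$-equation does not close, and in fact your $V=\nabla\cdot(\rho\bF)$ is not preserved by the flow.

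With the correct quantity $V^i:=\partial_j(\rho F^{ji})$, the argument becomes cleaner than you anticipated and does not need $G$ at all. Applying $\partial_j$ to $\partial_t(\rho F^{ji})=-\partial_k(\rho u^k F^{ji})+\rho F^{ki}\partial_k u^j$ and rearranging gives
\[
\partial_t V^i+\partial_k(u^k V^i)=\partial_j\bigl(\rho F^{ki}\partial_k u^j\bigr)-\partial_k\bigl(\rho F^{ji}\partial_j u^k\bigr),
\]
and the two terms on the right are literally identical after the swap of the two dummy indices $j\leftrightarrow k$. So $\partial_t V^i+\partial_k(u^kV^i)=0$, a closed linear transport law, and $V\equiv 0$ follows from $V|_{t=0}=0$ by uniqueness. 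You should replace your $V$ by this one and drop the (incorrect) appeal to $G$ in the first part; the $G$-argument can then be given independently exactly as you wrote it.
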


From Proposition \ref{pr1}, the $i$-th component of the vector ${\rm div}(\rho\bF\bF^T)$ may be written as
\begin{equation}\label{4.5}
\partial_j(\rho F^{ik}F^{jk})=\rho F^{jk}\partial_j F^{ik}+F^{ik}\partial_j(\rho F^{jk})=\rho F^{jk}\partial_jF^{ik},
\end{equation}
where we used the first equality in \eqref{4.3}.
\begin{lemma}\label{le3}
Let $p$ satisfy \eqref{1.100} and $\sigma_1\in(1-\frac{N}{2}, \frac{2N}{p}-\frac{N}{2}]$. Then the solution $(b, \bH, \bu)$ to system \eqref{2.2} satisfies
\begin{equation}\label{4.11}
\begin{split}
&\quad\|(b, \bH, \bu)(t)\|_{\dot{B}_{2,\infty}^{-\sigma_1}}^\ell\\[1ex]
&\lesssim
\|(b_0, \bH_0, \bu_0)\|_{\dot{B}_{2,\infty}^{-\sigma_1}}^\ell
+\int_0^tA_1(\tau)\|(b, \bH, \bu)(\tau)\|_{\dot{B}_{2,\infty}^{-\sigma_1}}^\ell d\tau
+\int_0^tA_2(\tau) d\tau,
\end{split}
\end{equation}
where
\begin{equation}\nonumber
\begin{split}
A_1(t)&\equ\|(b,\bH,\bu)\|_{\dot{B}_{2,1}^{\frac{N}{2}+1}}^\ell+\|(b, \bH)\|_{\dot{B}_{p,1}^{\frac{N}{p}}}^h
+\|\bu\|_{\dot{B}_{p,1}^{\frac{N}{p}+1}}^h+\|b\|_{\dot{B}_{p,1}^{\frac{N}{p}}}^2+
\|b\|_{\dot{B}_{p,1}^{\frac{N}{p}}}\|\bu\|_{\dot{B}_{p,1}^{\frac{N}{p}+1}}^h
\end{split}
\end{equation}
and
\begin{equation}\nonumber
\begin{split}
A_2(t)&\equ\Big(\|(b,\bH, \bu)\|_{\dot{B}_{p,1}^{\frac{N}{p}}}^h\Big)^2
+\|b\|_{\dot{B}_{p,1}^{\frac{N}{p}}}^2\|b\|_{\dot{B}_{p,1}^{\frac{N}{p}}}^h\\[1ex]
&\quad\quad\quad\quad\quad\quad\quad\quad\quad\quad\quad\quad+\|(b, \bH)\|_{\dot{B}_{p,1}^{\frac{N}{p}}}^h\|\bu\|_{\dot{B}_{p,1}^{\frac{N}{p}+1}}^h
+\Big(\|b\|_{\dot{B}_{p,1}^{\frac{N}{p}}}\Big)^2\|\bu\|_{\dot{B}_{p,1}^{\frac{N}{p}+1}}^h.
\end{split}
\end{equation}
\end{lemma}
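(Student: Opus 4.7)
The plan is to run a low-frequency localized energy argument in the spirit of Danchin-Xu, combined with the non-classical product estimate \eqref{4.2}. I would first apply $\dot\Delta_j$ for $j\leq k_0$ to the perturbed system \eqref{2.2}. The linearized operator around the equilibrium has the familiar structure of the compressible Navier-Stokes system in $(b,\bu)$, coupled to the deformation gradient $\bH$ through the energy-conservative cross terms $\alpha\,\partial_kH^{ik}$ and $\nabla\bu$. Using the Lyapunov functional of \cite{qian2010global,pan2019global} (equivalent to $\|\dot\Delta_j(b,\bH,\bu)\|_{L^2}^2$), square-rooting, integrating in time, multiplying by $2^{-j\sigma_1}$ and taking $\sup_{j\leq k_0}$ would yield
\[
\|(b,\bH,\bu)(t)\|_{\dot B_{2,\infty}^{-\sigma_1}}^\ell \;\lesssim\; \|(b_0,\bH_0,\bu_0)\|_{\dot B_{2,\infty}^{-\sigma_1}}^\ell + \int_0^t \|\mathbf f(\tau)\|_{\dot B_{2,\infty}^{-\sigma_1}}^\ell\,d\tau,
\]
where $\mathbf f$ denotes the nonlinear source of \eqref{2.2}. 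The linear damping of order $2^{2j}$ can be dropped since only a boundedness estimate is required here.

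\textbf{Nonlinear source.} The proof will then reduce to the pointwise-in-time bound $\|\mathbf f\|_{\dot B_{2,\infty}^{-\sigma_1}}^\ell \lesssim A_1\,\|(b,\bH,\bu)\|_{\dot B_{2,\infty}^{-\sigma_1}}^\ell + A_2$. For each bilinear contribution --- $\mathrm{div}(b\bu)$, $\bu\cdot\nabla\bu$, $\alpha H^{jk}\partial_jH^{ik}$, $\nabla\bu\,\bH$, $\bu\cdot\nabla\bH$ --- my plan is to split one factor as $g=g^\ell+g^h$. The low-frequency piece $g^\ell$ would be treated by the non-classical estimate \eqref{4.2}; the two Besov norms appearing on its right-hand side should be controlled, via the low-frequency Sobolev embedding $\dot B_{2,\infty}^{-\sigma_1}\hookrightarrow \dot B_{p,\infty}^{-\sigma_1+N/p-N/2}$ (valid for $p\geq 2$), by $\|(b,\bH,\bu)\|_{\dot B_{2,\infty}^{-\sigma_1}}^\ell$. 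The remaining factor would naturally produce the entries of $A_1$: either $\|(b,\bH,\bu)\|_{\dot B_{2,1}^{N/2+1}}^\ell$ (when additional regularity is needed to absorb the derivative loss of transport-type nonlinearities), or the high-frequency norms of $b,\bH,\bu$. The $g^h$-piece should be absorbed by the classical product estimate of Corollary \ref{co2.1}, generating the quadratic high-frequency terms of $A_2$. For the quasilinear contributions $I(b)\mathcal A\bu$, $K(b)\partial_ib$ and the viscosity perturbation $\tfrac{1}{1+b}\mathrm{div}(2\tilde\mu(b)D\bu+\tilde\lambda(b)\mathrm{div}\bu\,\bI)$, I would first use the composition estimate Proposition \ref{pra.4} to bound $I(b), K(b), \tilde\mu(b), \tilde\lambda(b)$ and $1/(1+b)-1$ in $\dot B_{p,1}^{N/p}$ by $\|b\|_{\dot B_{p,1}^{N/p}}$, and then apply \eqref{4.1}-\eqref{4.2} once more; the two derivatives on $\bu$ in the viscosity perturbation force $\bu$ into $\dot B_{p,1}^{N/p+1}$, which is what generates the cross terms $\|b\|_{\dot B_{p,1}^{N/p}}\|\bu\|_{\dot B_{p,1}^{N/p+1}}^h$ of $A_1$ and $(\|b\|_{\dot B_{p,1}^{N/p}})^2\|\bu\|_{\dot B_{p,1}^{N/p+1}}^h$ of $A_2$.

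\textbf{Main obstacle.} The hard part is not the linear semigroup bound but the low-frequency estimate of the bilinear nonlinearities. Classical product rules either force $\sigma_1=\sigma_0$ with a smallness assumption at low frequency (as in \cite{pan2019global}), or require a case split between the non-oscillation regime $p\leq N$ and the oscillation regime $p>N$, each with its own Sobolev embedding and hence its own decay rate. The sum of the two Besov norms on the right of \eqref{4.2} is exactly the device that unifies these two regimes uniformly in $p$; matching this sum against $\|(b,\bH,\bu)\|_{\dot B_{2,\infty}^{-\sigma_1}}^\ell$ at low frequency will be the technical core of the proof. Once this matching has been set up, the remaining bookkeeping --- deciding which derivative is placed on which factor, and when $\bu$ must be taken in $\dot B_{p,1}^{N/p+1}$ rather than $\dot B_{p,1}^{N/p}$ --- will be routine but somewhat lengthy, and is precisely what produces the stated forms of $A_1$ and $A_2$.
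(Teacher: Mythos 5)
Your outline — a localized low-frequency energy inequality followed by nonlinear estimates built on the non-classical product rule \eqref{4.2} — is indeed the same strategy as the paper. But two steps are glossed over in ways that matter.

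\emph{The linear step is not as plug-and-play as you suggest.} System \eqref{2.2} in the raw variables $(b,\bH,\bu)$ does not yield a closed block energy identity: the coupling $\alpha\partial_k H^{ik}$ in the momentum equation cannot be symmetrized against the $\nabla\bu$ source in the $\bH$-equation without invoking the compatibility conditions \eqref{4.3} preserved by the flow. The paper first passes to $\omega=\Lambda^{-1}\mathrm{div}\,\bu$, $e^{ij}=\Lambda^{-1}\partial_j u^i$, and uses the curl constraint on $\bF$ to reduce $\Lambda^{-1}\partial_j\partial_k H^{ik}$ to $-\Lambda H^{ij}$ plus a quadratic commutator (identity \eqref{4.7}), and the div--curl identity $\nabla\cdot(\rho\bF^T)=0$ to rewrite $\partial_i H^{ij}=-\partial_j b-G_0^j$ with $G_0^j=\partial_i(bH^{ij})$ (identity \eqref{4.8}). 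These are exactly what make the cross terms in the Lyapunov functional cancel and produce dissipation on $(b,\bH)$; they also inject the \emph{additional} nonlinear source $G_0$, which your description of "$\mathbf f$ = the nonlinearities of \eqref{2.2}" omits but which contributes to both $A_1$ and $A_2$.

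\emph{The decomposition must split both factors, not one.} If in $\mathrm{div}(b\bu)$ you split only $\bu$ and treat $b\,\mathrm{div}\bu^\ell$ via \eqref{4.2}, the coefficient you obtain is the full $\|b\|_{\dot B_{p,1}^{N/p}}$ (unsquared). Under Theorem \ref{th1.1} this quantity is only $L^2_t$, not $L^1_t$, so the estimate $\int_0^t A_1\,d\tau\lesssim\mathcal X_{p,0}$ (needed in \eqref{4.73} for Gronwall) would fail. The paper therefore splits \emph{both} factors into $\ell+h$ pieces; \eqref{4.1} is used when the factor carrying the derivative is low-frequency (yielding $\|\cdot\|^\ell_{\dot B_{2,1}^{N/2+1}}$, which is $L^1_t$) or when the low-frequency factor carries the $\dot B_{2,\infty}^{-\sigma_1}$ norm, and \eqref{4.2} is used only when the derivative sits on a high-frequency factor (yielding $\|\cdot\|^h_{\dot B_{p,1}^{N/p}}$ or $\|\bu\|^h_{\dot B_{p,1}^{N/p+1}}$, also $L^1_t$). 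Without this refinement the $A_1$ you would produce does not match the statement and is not integrable in time.
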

\begin{proof}
As in \cite{qian2010global},
 we introduce
 \begin{equation}\label{4.6}
 \omega=\Lambda^{-1}{\rm div}{\bf u}\,\,\,\,\,\,{\rm and}\,\,\,\,\,\,\, e^{ij}=\Lambda^{-1}\partial_j u^i,
 \end{equation}
where $\Lambda^sz\equ \mathcal{F}^{-1}(|\xi|^s\mathcal{F}z),\,\,s\in\mathbb{R}$. Applying the second equality in \eqref{4.3}, one gets
\begin{equation}\label{4.7}
\Lambda^{-1}(\partial_j\partial_k H^{ik})=-\Lambda H^{ij}-\Lambda^{-1}\partial_k(H^{lj}\partial_lH^{ik}-H^{lk}\partial_lH^{ij}).
\end{equation}
Then system \eqref{2.2} becomes
\begin{equation}\label{3.1}
\left\{
\begin{array}{ll}
\partial_t {b}+\Lambda\omega=G_1,\\[1ex]
\partial_te^{ij}-\bar{\mu}\Delta e^{ij}-(\bar{\lambda}+\bar{\mu})\partial_i\partial_j\omega+\Lambda^{-1}
\partial_i\partial_jb+\Lambda H^{ij}=G_2^{ij},\\[1ex]
\partial_tH^{ij}-\Lambda e^{ij}=G_3^{ij},\\[1ex]
\omega=-\Lambda^{-2}\partial_i\partial_j e^{ij},\,\,\,u^i=-\Lambda^{-1}\partial_j e^{ij},\\[1ex]
(b, \bH, {\bf e})|_{t=0}=(b_0, \bH_0, {\bf e}_0),
\end{array}
\right.
\end{equation}
where $G_1=-b\nabla\cdot\bu-\bu\cdot\nabla b,\,\,\,G_3^{ij}=\partial_ku^iH^{kj}-\bu\cdot \nabla H^{ij}$ and
\medskip
\begin{equation}\nonumber
\begin{split}
 G_2^{ij}&=-\Lambda^{-1}\partial_j\big(\bu\cdot\nabla u^i-H^{lk}\partial_lH^{ik}+I(b)\mathcal{A}u^i+K(b)\partial_ib\big)\\[1ex]
 &\quad-\Lambda^{-1}\partial_k(H^{lj}\partial_lH^{ik}-H^{lk}\partial_lH^{ij})
 +\displaystyle\Lambda^{-1}\partial_j\Big(\frac{1}{1+b}{\rm div}\big(2\widetilde{\mu}(b)D(\bu)+\widetilde{\lambda}(b){\rm div}\bu\,{\bI}\big)\Big)^i.
\end{split}
\end{equation}
On the other hand, we need the following auxiliary equation in subsequent estimates:
\begin{equation}\label{4.8}
\partial_iH^{ij}=-\partial_jb-G_0^j,\,\,\,\,\,\,G_0^j=\partial_i(bH^{ij}),
\end{equation}
which is deduced from the first equality in \eqref{4.3}.

Utilizing the operator $\dot{\Delta}_k$ to \eqref{3.1} and denoting $n_k\equ \dot{\Delta}_kn$, one has for all $k\in \mathbb{Z}$ that
\begin{equation}\label{3.2}
\left\{
\begin{array}{ll}
\partial_t {b}_k+\Lambda\omega_k=G_{1k},\\[1ex]
\partial_te_k^{ij}-\bar{\mu}\Delta e_k^{ij}-(\bar{\lambda}+\bar{\mu})\partial_i\partial_j\omega_k+\Lambda^{-1}
\partial_i\partial_jb_k+\Lambda H_k^{ij}=G_{2k}^{ij},\\[1ex]
\partial_tH_k^{ij}-\Lambda e_k^{ij}=G_{3k}^{ij},\\[1ex]
\omega_k=-\Lambda^{-2}\partial_i\partial_je_k^{ij}.
\end{array}
\right.
\end{equation}
Taking $L^2$ scalar product of \eqref{3.2}$_2$ with $e_k^{ij}$ and thanks to \eqref{3.2}$_4$, we derive that
\begin{equation}\label{3.3}
\frac{1}{2}\frac{d}{dt}\|{\bf e}_k\|_{L^2}^2+\bar{\mu}\|\Lambda {\bf e}_k\|_{L^2}^2+(\bar{\mu}+\bar{\lambda})\|\Lambda\omega_k\|_{L^2}^2-(b_k, \Lambda\omega_k)+(\Lambda \bH_k, {\bf e}_k)=({\bf G}_{2k}, {\bf e}_k).
\end{equation}

Taking $L^2$ inner product of \eqref{3.2}$_1$ and \eqref{3.2}$_3$ with $b_k$ and $\bH_k$, respectively, and then adding the resulting equations to \eqref{3.3}, we have
\begin{equation}\label{3.4}
\begin{split}
 &\quad\frac{1}{2}\frac{d}{dt}\big(\|b_k\|_{L^2}^2+\|\bH_k\|_{L^2}^2+\|{\bf e}_k\|_{L^2}^2\big)+\bar{\mu}\|\Lambda {\bf e}_k\|_{L^2}^2+(\bar{\mu}+\bar{\lambda})\|\Lambda\omega_k\|_{L^2}^2\\[1ex]
 &=(G_{1k}, b_k)+({\bf G}_{2k}, {\bf e}_k)+({\bf G}_{3k}, \bH_k).
\end{split}
\end{equation}

To derive the dissipation arising from $(b, \bH)$, we execute the operator $\Lambda$ to \eqref{3.2}$_1$ and take the $L^2$ inner product of the resulting equation with $-\omega_k$. Also, we take the $L^2$ inner product of \eqref{3.2}$_2$ with $\Lambda^{-1}\partial_j\partial_jb_k$. Adding those resulting equations together yields
\begin{equation}\label{3.5}
\begin{split}
&\quad-\frac{d}{dt}(\Lambda b_k, \omega_k)+\|\Lambda b_k\|_{L^2}^2-\|\Lambda\omega_k\|_{L^2}^2-(\Lambda^2\omega_k, \Lambda b_k)+(H_k^{ij}, \partial_i\partial_jb_k)\\[1ex]
&=-(\Lambda G_{1k}, \omega_k)+(G_{2k}^{ij}, \Lambda^{-1}\partial_i\partial_jb_k).
\end{split}
\end{equation}
In a similar manner, we apply $\Lambda$ to \eqref{3.2}$_3$ and then take the $L^2$ inner product of the resulting equation with $e_k^{ij}$ and  also take the $L^2$ inner product of \eqref{3.2}$_2$ with $\Lambda H_k^{ij}$.
Then summing up them implies
\begin{equation}\label{3.1000}
\begin{split}
&\frac{d}{dt}(\Lambda \bH_k, {\bf e}_k)+\|\Lambda \bH_k\|_{L^2}^2-\|\Lambda{\bf e}_k\|_{L^2}^2-(\bar{\mu}+\bar{\lambda})(\Lambda H_k^{ij}, \partial_i\partial_j\omega_k)\\[1ex]
&\quad\quad\quad\quad\quad\quad\quad
+\bar{\mu}(\Lambda^2{\bf e}_k, \Lambda \bH_k)+(\partial_i\partial_jb_k, H_k^{ij})=(\Lambda{\bf G}_{3k}, {\bf e}_k)+({\bf G}_{2k}, \Lambda \bH_k).
\end{split}
\end{equation}

Multiplying a small constant $r>0$ which is determined later to \eqref{3.5} and \eqref{3.1000}, respectively, and then adding them to \eqref{3.4}, we infer
\begin{equation}\label{3.7}
\begin{aligned}
&\quad\frac{1}{2}\frac{d}{dt}\left(\|b_k\|_{L^2}^2+\|\bH_k\|_{L^2}^2+\|{\bf e}_k\|_{L^2}^2+2r(\Lambda\bH_k, {\bf e}_k)-2r(\Lambda b_k, \omega_k)\right)\\[1ex]
&\quad+(\bar{\mu}-r)\|\Lambda{\bf e}_k\|_{L^2}^2+(\bar{\mu}+\bar{\lambda}-r)\|\Lambda\omega_k\|_{L^2}^2
+r\|\Lambda b_k\|_{L^2}^2+r\|\Lambda\bH_k\|_{L^2}^2\\[1ex]
&\quad+r\bar{\mu}(\Lambda^2{\bf e}_k, \Lambda\bH_k)-r(\bar{\mu}+\bar{\lambda})(\Lambda H_k^{ij}, \partial_i\partial_j\omega_k)-r(\Lambda^2\omega_k, \Lambda b_k)+2r(\partial_i\partial_jb_k, H_k^{ij})
\\[1ex]
&=(G_{1k}, b_k)+({\bf G}_{2k}, {\bf e}_k)+({\bf G}_{3k}, \bH_k)
-r(\Lambda G_{1k}, \omega_k)+r(G_{2k}^{ij}, \Lambda^{-1}\partial_i\partial_jb_k)\\[1ex]
&\quad+r(\Lambda{\bf G}_{3k}, {\bf e}_k)+r({\bf G}_{2k}, \Lambda \bH_k).
\end{aligned}
\end{equation}
It follows from \eqref{4.8} that
\begin{equation}\label{3.1001}
\begin{aligned}
  (\partial_i\partial_jb_k, H_k^{ij})=(b_k, \partial_i\partial_j H_k^{ij})=(b_k, -\Delta b_k-\partial_j
  G_{0k}^j)=\|\Lambda b_k\|_{L^2}^2-(b_k, \partial_jG_{0k}^j).
\end{aligned}
\end{equation}
Putting \eqref{3.1001} to \eqref{3.7}, we achieve that
\begin{equation}\label{3.1002}
\begin{aligned}
  &\frac{d}{dt}g_{\ell, k}^2+\tilde{g}_{\ell,k}^2=(G_{1k}, b_k)+({\bf G}_{2k}, {\bf e}_k)+({\bf G}_{3k}, \bH_k)
-r(\Lambda G_{1k}, \omega_k)\\[1ex]
&\quad\quad\quad\quad\quad\quad+r(G_{2k}^{ij}, \Lambda^{-1}\partial_i\partial_jb_k)+r(\Lambda{\bf G}_{3k}, {\bf e}_k)+r({\bf G}_{2k}, \Lambda \bH_k)+2r(b_k, \partial_jG_{0k}^j),
\end{aligned}
\end{equation}
where
\begin{equation}\label{3.1003}
\begin{aligned}
\left.
\begin{array}{ll}
g_{\ell, k}^2\equ\|b_k\|_{L^2}^2+\|\bH_k\|_{L^2}^2+\|{\bf e}_k\|_{L^2}^2+2r(\Lambda\bH_k, {\bf e}_k)-2r(\Lambda b_k, \omega_k),\\[2ex]
\tilde{g}_{\ell, k}^2\equ(\bar{\mu}-r)\|\Lambda{\bf e}_k\|_{L^2}^2+(\bar{\mu}+\bar{\lambda}-r)\|\Lambda\omega_k\|_{L^2}^2
+3r\|\Lambda b_k\|_{L^2}^2+r\|\Lambda\bH_k\|_{L^2}^2\\[1ex]
\quad\quad\quad+r\bar{\mu}(\Lambda^2{\bf e}_k, \Lambda\bH_k)-r(\bar{\mu}+\bar{\lambda})(\Lambda H_k^{ij}, \partial_i\partial_j\omega_k)-r(\Lambda^2\omega_k, \Lambda b_k).
\end{array}
\right.
\end{aligned}
\end{equation}
For any fixed $k_0$, we may choose $r\approx r(\bar{\lambda}, \bar{\mu}, k_0)$ sufficiently small such that for $k\leq k_0$,
\begin{equation}\label{3.1004}
\left.
\begin{array}{ll}
g_{\ell, k}^2\approx\|b_k\|_{L^2}^2+\|\bH_k\|_{L^2}^2+\|{\bf e}_k\|_{L^2}^2,\\[1ex]
\tilde{g}_{\ell, k}^2\approx2^{2k}\big(\|b_k\|_{L^2}^2+\|\bH_k\|_{L^2}^2+\|{\bf e}_k\|_{L^2}^2\big).
\end{array}
\right.
\end{equation}
Using Cauchy-Schwarz inequality to deal with terms in the  right hand of \eqref{3.1002}, under the condition that $k\leq k_0$, we could derive the following inequality:
\begin{equation}\label{3.1005}
\frac{d}{dt}g_{\ell, k}+2^{2k}g_{\ell, k}\lesssim \|{\bf G}_{0k}\|_{L^2}+\|{ G}_{1k}\|_{L^2}+\|{\bf G}_{2k}\|_{L^2}+\|{\bf G}_{3k}\|_{L^2}.
\end{equation}
Then, for any $t>0$, integrating in time from $0$ to $t$ on both sides of \eqref{3.1005}, one derives
\begin{equation}\label{3.14}
\begin{aligned}
&\quad\|(b_k, \bH_k, {\bf e}_k)\|_{L^2}\\[1ex]
&\lesssim \|(b_{0k}, \bH_{0k}, {\bf e}_{0k})\|_{L^2}+\int_0^t\big(\|{\bf G}_{0k}\|_{L^2}+\|{ G}_{1k}\|_{L^2}+\|{\bf G}_{2k}\|_{L^2}+\|{\bf G}_{3k}\|_{L^2}\big)ds.
\end{aligned}
\end{equation}
Multiplying $2^{k(-\sigma_1)}$ in \eqref{3.14} and taking supremum in terms of $k\leq k_0$, we arrive at
\begin{equation}\label{3.15}
\|(b, \bH, {\bf e})(t)\|_{\dot{B}_{2,\infty}^{-\sigma_1}}^\ell\lesssim \|(b_{0}, \bH_{0}, {\bf e}_{0})\|_{\dot{B}_{2,\infty}^{-\sigma_1}}^\ell+\int_0^t\|({\bf G}_0,{G}_1,{\bf G}_2, {\bf G}_3)\|_{\dot{B}_{2,\infty}^{-\sigma_1}}^\ell ds.
\end{equation}
which combined with the relation ${u^i}=-\Lambda^{-1}\partial_j e^{ij}$  gives that
\begin{equation}\label{3.1006}
\|(b, \bH, \bu)(t)\|_{\dot{B}_{2,\infty}^{-\sigma_1}}^\ell\lesssim \|(b_{0}, \bH_{0}, {\bf u}_{0})\|_{\dot{B}_{2,\infty}^{-\sigma_1}}^\ell+\int_0^t\|({\bf G}_0,{G}_1,{\bf G}_2, {\bf G}_3)\|_{\dot{B}_{2,\infty}^{-\sigma_1}}^\ell ds.
\end{equation}

In what follows, we focus on estimates of nonlinear norm $\|({\bf G}_0,{G}_1,{\bf G}_2, {\bf G}_3)\|_{\dot{B}_{2,\infty}^{-\sigma_1}}^\ell$. Firstly, we estimate the term $G_0^j=\partial_i(bH^{ij})=\partial_ibH^{ij}+b\partial_iH^{ij}$.

\underline{Estimate of $\partial_ibH^{ij}$ and $b\partial_iH^{ij}$}. We only deal with the term $\partial_ibH^{ij}$ and the term $b\partial_iH^{ij}$ could be handled similarly. Decompose $\partial_ibH^{ij}$ as
$$\partial_ibH^{ij}=(\partial_ib)^\ell(H^{ij})^\ell+(\partial_ib)^\ell(H^{ij})^h+
(\partial_ib)^h(H^{ij})^\ell+
(\partial_ib)^h(H^{ij})^h.$$
Due to  \eqref{4.1}, we infer that
\be\label{4.14}
\|(\partial_ib)^\ell(H^{ij})^\ell\|_{\dot{B}_{2,\infty}^{-\sigma_1}}\lesssim \|(\partial_ib)^\ell\|_{\dot{B}_{p,1}^{\frac{N}{p}}}
\|(H^{ij})^\ell\|_{\dot{B}_{2,\infty}^{-\sigma_1}}\lesssim \|b\|_{\dot{B}_{2,1}^{\frac{N}{2}+1}}^\ell
\|\bH\|_{\dot{B}_{2,\infty}^{-\sigma_1}}^\ell,
\ee
and
\be\label{4.15}
\|(\partial_ib)^\ell(H^{ij})^h\|_{\dot{B}_{2,\infty}^{-\sigma_1}}\lesssim \|(H^{ij})^h\|_{\dot{B}_{p,1}^{\frac{N}{p}}}
\|(\partial_ib)^\ell\|_{\dot{B}_{2,\infty}^{-\sigma_1}}\lesssim \|\bH\|_{\dot{B}_{p,1}^{\frac{N}{p}}}^h
\|b\|_{\dot{B}_{2,\infty}^{-\sigma_1}}^\ell.
\ee
By means of \eqref{4.2}, one gets
\be\label{4.16}
\begin{split}
\|(\partial_ib)^h(H^{ij})^\ell\|_{\dot{B}_{2,\infty}^{-\sigma_1}}^\ell&\lesssim \|(\partial_ib)^h\|_{\dot{B}_{p,1}^{\frac{N}{p}-1}}
\Big(\|(H^{ij})^\ell\|_{\dot{B}_{p,\infty}^{-\sigma_1+\frac{N}{p}-\frac{N}{2}+1}}
+\|(H^{ij})^\ell\|_{\dot{B}_{p,\infty}^{-\sigma_1+\frac{2N}{p}-N+1}}\Big)\\[1ex]
&\lesssim\|b\|_{\dot{B}_{p,1}^{\frac{N}{p}}}^h\|\bH\|_{\dot{B}_{p,\infty}^{-\sigma_1+\frac{2N}{p}-N+1}}^\ell
\lesssim\|b\|_{\dot{B}_{p,1}^{\frac{N}{p}}}^h\|\bH\|_{\dot{B}_{2,\infty}^{-\sigma_1}}^\ell,
\end{split}
\ee
where we used that $-\sigma_1+\frac{2N}{p}-N+1\leq -\sigma_1+\frac{N}{p}-\frac{N}{2}+1$ in the second inequality
and the embedding ${\dot{B}_{2,\infty}^{-\sigma_1}}
\hookrightarrow{\dot{B}_{p,\infty}^{-\sigma_1+\frac{2N}{p}-N+1}}$  at the low frequency in the last inequality when $2\leq p\leq \frac{2N}{N-2}$. For the term $(\partial_ib)^h(H^{ij})^h$,  by \eqref{4.2} again, we have
\be\label{4.17}
\begin{split}
\|(\partial_ib)^h(H^{ij})^h\|_{\dot{B}_{2,\infty}^{-\sigma_1}}^\ell&\lesssim \|(\partial_ib)^h\|_{\dot{B}_{p,1}^{\frac{N}{p}-1}}
\Big(\|(H^{ij})^h\|_{\dot{B}_{p,\infty}^{-\sigma_1+\frac{N}{p}-\frac{N}{2}+1}}
+\|(H^{ij})^h\|_{\dot{B}_{p,\infty}^{-\sigma_1+\frac{2N}{p}-N+1}}\Big)\\[1ex]
&\lesssim\|b\|_{\dot{B}_{p,1}^{\frac{N}{p}}}^h
\|\bH\|_{\dot{B}_{p,\infty}^{-\sigma_1+\frac{N}{p}-\frac{N}{2}+1}}^h
\lesssim\|b\|_{\dot{B}_{p,1}^{\frac{N}{p}}}^h\|\bH\|_{\dot{B}_{p,1}^{\frac{N}{p}}}^h.
\end{split}
\ee
where we used that $-\sigma_1+\frac{2N}{p}-N+1\leq -\sigma_1+\frac{N}{p}-\frac{N}{2}+1$ in the second inequality when $p\geq 2$,
and the embedding $\dot{B}_{p,1}^{\frac{N}{p}}
\hookrightarrow{\dot{B}_{p,\infty}^{-\sigma_1+\frac{N}{p}-\frac{N}{2}+1}}$  at the high frequency in the last inequality when $\sigma_1>1-\frac{N}{2}$.

Next, we deal with $G_1=-b\,{\rm div}\bu-\bu\cdot\nabla b$ and  $G_3^{ij}=\partial_ku^iH^{kj}-\bu\cdot \nabla H^{ij}$. We only estimate $G_1$, since the two terms in $G_3^{ij}$ could be treated similarly.

\underline{Estimate of $b\,{\rm div}\bu$}. We decompose
$$b\,{\rm div}\bu=b^\ell{\rm div}\bu+b^h{\rm div}\bu^\ell+b^h{\rm div}\bu^h.$$
Thanks to \eqref{4.1}, we get
\be\label{4.18}
\|b^\ell{\rm div}\bu\|_{\dot{B}_{2,\infty}^{-\sigma_1}}\lesssim \|{\rm div}\bu\|_{\dot{B}_{p,1}^{\frac{N}{p}}}
\|b\|_{\dot{B}_{2,\infty}^{-\sigma_1}}^\ell\lesssim \Big(\|\bu\|_{\dot{B}_{2,1}^{\frac{N}{2}+1}}^\ell+\|\bu\|_{\dot{B}_{p,1}^{\frac{N}{p}+1}}^h\Big)
\|b\|_{\dot{B}_{2,\infty}^{-\sigma_1}}^\ell
\ee
and
\be\label{4.19}
\|b^h{\rm div}\bu^\ell\|_{\dot{B}_{2,\infty}^{-\sigma_1}}\lesssim \|b^h\|_{\dot{B}_{p,1}^{\frac{N}{p}}}
\|{\rm div}\bu^\ell\|_{\dot{B}_{2,\infty}^{-\sigma_1}}\lesssim \|b\|_{\dot{B}_{p,1}^{\frac{N}{p}}}^h
\|\bu\|_{\dot{B}_{2,\infty}^{-\sigma_1}}^\ell.
\ee
In view of   \eqref{4.2}, one derives
\be\label{4.20}
\begin{split}
\|b^h{\rm div}\bu^h\|_{\dot{B}_{2,\infty}^{-\sigma_1}}^\ell&\lesssim \|b^h\|_{\dot{B}_{p,1}^{\frac{N}{p}-1}}
\Big(\|{\rm div}\bu^h\|_{\dot{B}_{p,\infty}^{-\sigma_1+\frac{N}{p}-\frac{N}{2}+1}}+\|{\rm div}\bu^h\|_{\dot{B}_{p,\infty}^{-\sigma_1+\frac{2N}{p}-N+1}}\Big)\\[1ex]
&\lesssim\|b\|_{\dot{B}_{p,1}^{\frac{N}{p}}}^h\|\bu\|_{\dot{B}_{p,1}^{\frac{N}{p}+1}}^h,
\end{split}
\ee
where we used that
$
-\sigma_1+\frac{2N}{p}-N+2\leq -\sigma_1+\frac{N}{p}-\frac{N}{2}+2<\frac{N}{p}+1
$
since $\sigma_1>1-\frac{N}{2}$ and $p\geq 2$.

\underline{Estimate of $\bu\cdot\nabla b$}. Decomposing $\bu\cdot\nabla b=\bu^\ell\cdot\nabla b^\ell+\bu^h\cdot\nabla b^\ell+\bu^\ell\cdot\nabla b^h+\bu^h\cdot\nabla b^h$, we deduce from \eqref{4.1} that
\be\label{4.21}
\|\bu^\ell\nabla b^\ell\|_{\dot{B}_{2,\infty}^{-\sigma_1}}\lesssim\|\nabla b^\ell\|_{\dot{B}_{p,1}^{\frac{N}{p}}}
\|\bu^\ell\|_{\dot{B}_{2,\infty}^{-\sigma_1}}\lesssim\|b\|_{\dot{B}_{2,1}^{\frac{N}{2}+1}}^\ell
\|\bu\|_{\dot{B}_{2,\infty}^{-\sigma_1}}^\ell,
\ee
and
\be\label{4.22}
\|\bu^h\nabla b^\ell\|_{\dot{B}_{2,\infty}^{-\sigma_1}}\lesssim\|\bu^h\|_{\dot{B}_{p,1}^{\frac{N}{p}}}
\|\nabla b^\ell\|_{\dot{B}_{2,\infty}^{-\sigma_1}}\lesssim\|\bu\|_{\dot{B}_{p,1}^{\frac{N}{p}+1}}^h
\|b\|_{\dot{B}_{2,\infty}^{-\sigma_1}}^\ell.
\ee
Similar to \eqref{4.16}, one arrives at
\be\label{4.23}
\begin{split}
\|\bu^\ell\nabla b^h\|_{\dot{B}_{2,\infty}^{-\sigma_1}}^\ell&\lesssim\|\nabla b^h\|_{\dot{B}_{p,1}^{\frac{N}{p}-1}}
\Big(\|\bu^\ell\|_{\dot{B}_{p,\infty}^{-\sigma_1+\frac{N}{p}-\frac{N}{2}+1}}
+\|\bu^\ell\|_{\dot{B}_{p,\infty}^{-\sigma_1+\frac{2N}{p}-N+1}}\Big)\\[1ex]
&\lesssim\|b^h\|_{\dot{B}_{p,1}^{\frac{N}{p}}}\|\bu^\ell\|_{\dot{B}_{p,\infty}^{-\sigma_1+\frac{2N}{p}-N+1}}
\lesssim\|b\|_{\dot{B}_{p,1}^{\frac{N}{p}}}^h\|\bu\|_{\dot{B}_{2,\infty}^{-\sigma_1}}^\ell.
\end{split}
\ee
For the term $\bu^h\nabla b^h$,  by \eqref{4.2} again, one infers that
\be\label{4.24}
\begin{split}
\|\bu^h\nabla b^h\|_{\dot{B}_{2,\infty}^{-\sigma_1}}^\ell&\lesssim\|\nabla b^h\|_{\dot{B}_{p,1}^{\frac{N}{p}-1}}
\Big(\|\bu^h\|_{\dot{B}_{p,\infty}^{-\sigma_1+\frac{N}{p}-\frac{N}{2}+1}}
+\|\bu^h\|_{\dot{B}_{p,\infty}^{-\sigma_1+\frac{2N}{p}-N+1}}\Big)\\[1ex]
&\lesssim\|b^h\|_{\dot{B}_{p,1}^{\frac{N}{p}}}\|\bu^h\|_{\dot{B}_{p,\infty}^{-\sigma_1+\frac{N}{p}-\frac{N}{2}+1}}
\lesssim\|b\|_{\dot{B}_{p,1}^{\frac{N}{p}}}^h\|\bu\|_{\dot{B}_{p,1}^{\frac{N}{p}+1}}^h,
\end{split}
\ee
where we have applied the fact that
$
-\sigma_1+\frac{2N}{p}-N+1\leq -\sigma_1+\frac{N}{p}-\frac{N}{2}+1\leq \frac{N}{p}+1,
$
since $\sigma_1> 1-\frac{N}{2}$ and $p\geq 2$.

In what follows, we handle the term ${\bf G}_2$ which is expressed as
\begin{equation}\nonumber
\begin{split}
 G_2^{ij}&=-\Lambda^{-1}\partial_j\big(\bu\cdot\nabla u^i-H^{lk}\partial_lH^{ik}+I(b)\mathcal{A}u^i+K(b)\partial_ib\big)\\[1ex]
 &\quad-\Lambda^{-1}\partial_k(H^{lj}\partial_lH^{ik}-H^{lk}\partial_lH^{ij})
 +\displaystyle\Lambda^{-1}\partial_j\Big(\frac{1}{1+b}{\rm div}\big(2\widetilde{\mu}(b)D(\bu)+\widetilde{\lambda}(b){\rm div}\bu\,{\bI}\big)\Big)^i.
\end{split}
\end{equation}
Obviously, the operators $\Lambda^{-1}\partial_j$ and $\Lambda^{-1}\partial_k$  are homogeneous of degree zero.

\underline{Estimate of $\bu\cdot\nabla\bu$}. Decompose $\bu\cdot\nabla\bu=\bu^\ell\cdot\nabla\bu^\ell+\bu^\ell\cdot\nabla\bu^h
+\bu^h\cdot\nabla\bu^\ell+\bu^h\cdot\nabla\bu^h$.
It holds from \eqref{4.1} that
\be\label{4.25}
\|\bu^\ell\cdot\nabla \bu^\ell\|_{\dot{B}_{2,\infty}^{-\sigma_1}}\lesssim\|\nabla \bu^\ell\|_{\dot{B}_{p,1}^{\frac{N}{p}}}
\|\bu^\ell\|_{\dot{B}_{2,\infty}^{-\sigma_1}}\lesssim\|\bu\|_{\dot{B}_{2,1}^{\frac{N}{2}+1}}^\ell
\|\bu\|_{\dot{B}_{2,\infty}^{-\sigma_1}}^\ell,
\ee
and
\be\label{4.26}
\|\bu^h\cdot\nabla \bu^\ell\|_{\dot{B}_{2,\infty}^{-\sigma_1}}\lesssim\|\bu^h\|_{\dot{B}_{p,1}^{\frac{N}{p}}}
\|\nabla\bu^\ell\|_{\dot{B}_{2,\infty}^{-\sigma_1}}\lesssim\|\bu\|_{\dot{B}_{p,1}^{\frac{N}{p}+1}}^h
\|\bu\|_{\dot{B}_{2,\infty}^{-\sigma_1}}^\ell.
\ee
Using \eqref{4.2} yields
\be\label{4.124}
\begin{split}
\|\bu^\ell\cdot\nabla \bu^h\|_{\dot{B}_{2,\infty}^{-\sigma_1}}^\ell&\lesssim\|\nabla \bu^h\|_{\dot{B}_{p,1}^{\frac{N}{p}-1}}
\Big(\|\bu^\ell\|_{\dot{B}_{p,\infty}^{-\sigma_1+\frac{N}{p}-\frac{N}{2}+1}}
+\|\bu^\ell\|_{\dot{B}_{p,\infty}^{-\sigma_1+\frac{2N}{p}-N+1}}\Big)\\[1ex]
&\lesssim\|\bu^h\|_{\dot{B}_{p,1}^{\frac{N}{p}}}\|\bu^\ell\|_{\dot{B}_{p,\infty}^{-\sigma_1+\frac{2N}{p}-N+1}}
\lesssim\|\bu\|_{\dot{B}_{p,1}^{\frac{N}{p}+1}}^h\|\bu\|_{\dot{B}_{2,\infty}^{-\sigma_1}}^\ell,
\end{split}
\ee
and
\be\label{4.27}
\begin{split}
\|\bu^h\cdot\nabla \bu^h\|_{\dot{B}_{2,\infty}^{-\sigma_1}}^\ell&\lesssim\|\nabla \bu^h\|_{\dot{B}_{p,1}^{\frac{N}{p}-1}}
\Big(\|\bu^h\|_{\dot{B}_{p,\infty}^{-\sigma_1+\frac{N}{p}-\frac{N}{2}+1}}
+\|\bu^h\|_{\dot{B}_{p,\infty}^{-\sigma_1+\frac{2N}{p}-N+1}}\Big)\\[1ex]
&\lesssim\|\bu^h\|_{\dot{B}_{p,1}^{\frac{N}{p}}}\|\bu^h\|_{\dot{B}_{p,\infty}^{-\sigma_1+\frac{N}{p}-\frac{N}{2}+1}}
\lesssim\|\bu\|_{\dot{B}_{p,1}^{\frac{N}{p}}}^h\|\bu\|_{\dot{B}_{p,1}^{\frac{N}{p}}}^h.
\end{split}
\ee

\underline{Estimate of $H^{lj}\partial_lH^{ik}, H^{lk}\partial_lH^{ij}$ and $H^{lk}\partial_lH^{ik}$}. These three terms could be handled similar to the term $\partial_ibH^{ij}$ and  here we omit the details for simplicity.

\underline{Estimate of $I(b)\mathcal{A}\bu$}. Keeping in mind that $I(0)=0$, one may write
$$
I(b)=I^\prime(0)b+\bar{I}(b)b
$$
for some smooth function $\bar{I}$ vanishing at $0$. Thus, using \eqref{4.1} again, we have
\be\label{4.28}
\|b^\ell\mathcal{A}\bu^\ell\|_{\dot{B}_{2,\infty}^{-\sigma_1}}\lesssim\|\mathcal{A}\bu^\ell\|_{\dot{B}_{p,1}^{\frac{N}{p}}}
\|b^\ell\|_{\dot{B}_{2,\infty}^{-\sigma_1}}\lesssim\|\bu\|_{\dot{B}_{2,1}^{\frac{N}{2}+1}}^\ell
\|b\|_{\dot{B}_{2,\infty}^{-\sigma_1}}^\ell,
\ee
and
\be\label{4.29}
\|b^h\mathcal{A}\bu^\ell\|_{\dot{B}_{2,\infty}^{-\sigma_1}}\lesssim\|b^h\|_{\dot{B}_{p,1}^{\frac{N}{p}}}
\|\mathcal{A}\bu^\ell\|_{\dot{B}_{2,\infty}^{-\sigma_1}}\lesssim\|b\|_{\dot{B}_{p,1}^{\frac{N}{p}}}^h
\|\bu\|_{\dot{B}_{2,\infty}^{-\sigma_1}}^\ell.
\ee
Arguing similarly as deriving \eqref{4.124} and \eqref{4.27}, one has
\be\label{4.30}
\begin{split}
\|b^\ell\mathcal{A} \bu^h\|_{\dot{B}_{2,\infty}^{-\sigma_1}}^\ell&\lesssim\|\mathcal{A} \bu^h\|_{\dot{B}_{p,1}^{\frac{N}{p}-1}}
\Big(\|b^\ell\|_{\dot{B}_{p,\infty}^{-\sigma_1+\frac{N}{p}-\frac{N}{2}+1}}
+\|b^\ell\|_{\dot{B}_{p,\infty}^{-\sigma_1+\frac{2N}{p}-N+1}}\Big)\\[1ex]
&\lesssim\|\bu\|_{\dot{B}_{p,1}^{\frac{N}{p}+1}}^h\|b^\ell\|_{\dot{B}_{p,\infty}^{-\sigma_1+\frac{2N}{p}-N+1}}
\lesssim\|\bu\|_{\dot{B}_{p,1}^{\frac{N}{p}+1}}^h\|b\|_{\dot{B}_{2,\infty}^{-\sigma_1}}^\ell,
\end{split}
\ee
and
\be\label{4.31}
\begin{split}
\|b^h\mathcal{A}\bu^h\|_{\dot{B}_{2,\infty}^{-\sigma_1}}^\ell&\lesssim\|\mathcal{A}\bu^h\|_{\dot{B}_{p,1}^{\frac{N}{p}-1}}
\Big(\|b^h\|_{\dot{B}_{p,\infty}^{-\sigma_1+\frac{N}{p}-\frac{N}{2}+1}}
+\|b^h\|_{\dot{B}_{p,\infty}^{-\sigma_1+\frac{2N}{p}-N+1}}\Big)\\[1ex]
&\lesssim\|\bu^h\|_{\dot{B}_{p,1}^{\frac{N}{p}+1}}\|b^h\|_{\dot{B}_{p,\infty}^{-\sigma_1+\frac{N}{p}-\frac{N}{2}+1}}
\lesssim\|\bu\|_{\dot{B}_{p,1}^{\frac{N}{p}+1}}^h\|b\|_{\dot{B}_{p,1}^{\frac{N}{p}}}^h.
\end{split}
\ee
On the other hand, from \eqref{4.1}, \eqref{4.2}, Proposition \ref{pra.4} and Corollaries \ref{co2.1} and \ref{co2.2}, we have
\be\label{4.32}
\|\bar{I}(b)b\mathcal{A}\bu^\ell\|_{\dot{B}_{2,\infty}^{-\sigma_1}}
\lesssim\|\bar{I}(b)b\|_{\dot{B}_{p,1}^{\frac{N}{p}}}\|\mathcal{A}\bu^\ell\|_{\dot{B}_{2,\infty}^{-\sigma_1}}
\lesssim\|b\|_{\dot{B}_{p,1}^{\frac{N}{p}}}^2\|\bu\|_{\dot{B}_{2,\infty}^{-\sigma_1}}^\ell,
\ee
and
\be\label{4.33}
\begin{split}
\|\bar{I}(b)b\mathcal{A}\bu^h\|_{\dot{B}_{2,\infty}^{-\sigma_1}}^\ell
&\lesssim\|\mathcal{A}\bu^h\|_{\dot{B}_{p,1}^{\frac{N}{p}-1}}
\Big(\|\bar{I}(b)b\|_{\dot{B}_{p,\infty}^{-\sigma_1+\frac{N}{p}-\frac{N}{2}+1}}
+\|\bar{I}(b)b\|_{\dot{B}_{p,\infty}^{-\sigma_1+\frac{2N}{p}-N+1}}\Big)\\[1ex]
&\lesssim\|\bu^h\|_{\dot{B}_{p,1}^{\frac{N}{p}+1}}\|\bar{I}(b)\|_{\dot{B}_{p,1}^{\frac{N}{p}}}
\Big(\|b\|_{\dot{B}_{p,\infty}^{-\sigma_1+\frac{N}{p}-\frac{N}{2}+1}}
+\|b\|_{\dot{B}_{p,\infty}^{-\sigma_1+\frac{2N}{p}-N+1}}\Big)\\[1ex]
&\lesssim\|\bu\|_{\dot{B}_{p,1}^{\frac{N}{p}+1}}^h
\|b\|_{\dot{B}_{p,1}^{\frac{N}{p}}}\Big(\|b\|_{\dot{B}_{p,\infty}^{-\sigma_1+\frac{N}{p}-\frac{N}{2}+1}}^h
+\|b\|_{\dot{B}_{p,\infty}^{-\sigma_1+\frac{2N}{p}-N+1}}^\ell\Big)\\[1ex]
&\lesssim\|\bu\|_{\dot{B}_{p,1}^{\frac{N}{p}+1}}^h
\|b\|_{\dot{B}_{p,1}^{\frac{N}{p}}}\Big(\|b\|_{\dot{B}_{p,1}^{\frac{N}{p}}}^h
+\|b\|_{\dot{B}_{2,\infty}^{-\sigma_1}}^\ell\Big).
\end{split}
\ee

\underline{Estimate of $K(b)\nabla b$}.  In view of $K(0)=0$, we may write $K(b)=K^\prime(0)b+\bar{K}(b)b$, here $\bar{K}$ is a smooth function fulfilling $\bar{K}(0)=0$. For the term $b\nabla b$, we achieve
\be\label{4.34}
\|b^\ell\nabla b^\ell\|_{\dot{B}_{2,\infty}^{-\sigma_1}}\lesssim\|\nabla b^\ell\|_{\dot{B}_{p,1}^{\frac{N}{p}}}
\|b^\ell\|_{\dot{B}_{2,\infty}^{-\sigma_1}}\lesssim\|b\|_{\dot{B}_{2,1}^{\frac{N}{2}+1}}^\ell
\|b\|_{\dot{B}_{2,\infty}^{-\sigma_1}}^\ell,
\ee
and
\be\label{4.35}
\|b^h\nabla b^\ell\|_{\dot{B}_{2,\infty}^{-\sigma_1}}\lesssim\|b^h\|_{\dot{B}_{p,1}^{\frac{N}{p}}}
\|\nabla b^\ell\|_{\dot{B}_{2,\infty}^{-\sigma_1}}\lesssim\|b\|_{\dot{B}_{p,1}^{\frac{N}{p}}}^h
\|b\|_{\dot{B}_{2,\infty}^{-\sigma_1}}^\ell.
\ee
Also, one has
\be\label{4.36}
\begin{split}
\|b^\ell\nabla b^h\|_{\dot{B}_{2,\infty}^{-\sigma_1}}^\ell&\lesssim\|\nabla b^h\|_{\dot{B}_{p,1}^{\frac{N}{p}-1}}
\Big(\|b^\ell\|_{\dot{B}_{p,\infty}^{-\sigma_1+\frac{N}{p}-\frac{N}{2}+1}}
+\|b^\ell\|_{\dot{B}_{p,\infty}^{-\sigma_1+\frac{2N}{p}-N+1}}\Big)\\[1ex]
&\lesssim\|b\|_{\dot{B}_{p,1}^{\frac{N}{p}}}^h\|a^\ell\|_{\dot{B}_{p,\infty}^{-\sigma_1+\frac{2N}{p}-N+1}}
\lesssim\|b\|_{\dot{B}_{p,1}^{\frac{N}{p}}}^h\|b\|_{\dot{B}_{2,\infty}^{-\sigma_1}}^\ell,
\end{split}
\ee
and
\be\label{4.37}
\begin{split}
\|b^h\nabla b^h\|_{\dot{B}_{2,\infty}^{-\sigma_1}}^\ell&\lesssim\|\nabla b^h\|_{\dot{B}_{p,1}^{\frac{N}{p}-1}}
\Big(\|b^h\|_{\dot{B}_{p,\infty}^{-\sigma_1+\frac{N}{p}-\frac{N}{2}+1}}
+\|b^h\|_{\dot{B}_{p,\infty}^{-\sigma_1+\frac{2N}{p}-N+1}}\Big)\\[1ex]
&\lesssim\|b^h\|_{\dot{B}_{p,1}^{\frac{N}{p}}}\|b^h\|_{\dot{B}_{p,\infty}^{-\sigma_1+\frac{N}{p}-\frac{N}{2}+1}}
\lesssim\|b\|_{\dot{B}_{p,1}^{\frac{N}{p}}}^h\|b\|_{\dot{B}_{p,1}^{\frac{N}{p}}}^h.
\end{split}
\ee
In regard the term $\bar{K}(b)b\nabla b$, we use the decomposition $\bar{K}(b)b\nabla b=\bar{K}(b)b\nabla b^\ell+\bar{K}(b)b\nabla b^h$ and get from \eqref{4.1}-\eqref{4.2}, Corollary \ref{co2.2} and Proposition \ref{pra.4} again that
\be\label{4.38}
\begin{split}
\|\bar{K}(b)b\nabla b^\ell\|_{\dot{B}_{2,\infty}^{-\sigma_1}}\lesssim\|\bar{K}(b)b\|_{\dot{B}_{p,1}^{\frac{N}{p}}}
\|\nabla b\|_{\dot{B}_{2,\infty}^{-\sigma_1}}^\ell\lesssim\|b\|_{\dot{B}_{p,1}^{\frac{N}{p}}}^2
\|b\|_{\dot{B}_{2,\infty}^{-\sigma_1}}^\ell,
\end{split}
\ee
and
\be\label{4.39}
\begin{split}
\|\bar{K}(b)b\nabla b^h\|_{\dot{B}_{2,\infty}^{-\sigma_1}}^\ell&\lesssim\|\nabla b^h\|_{\dot{B}_{p,1}^{\frac{N}{p}-1}}
\Big(\|\bar{K}(b)b\|_{\dot{B}_{p,\infty}^{-\sigma_1+\frac{N}{p}-\frac{N}{2}+1}}
+\|\bar{K}(b)b\|_{\dot{B}_{p,\infty}^{-\sigma_1+\frac{2N}{p}-N+1}}\Big)\\[1ex]
&\lesssim\|b^h\|_{\dot{B}_{p,1}^{\frac{N}{p}}}\|\bar{K}(b)\|_{\dot{B}_{p,1}^{\frac{N}{p}}}
\Big(\|b\|_{\dot{B}_{p,\infty}^{-\sigma_1+\frac{N}{p}-\frac{N}{2}+1}}
+\|b\|_{\dot{B}_{p,\infty}^{-\sigma_1+\frac{2N}{p}-N+1}}\Big)\\[1ex]
&\lesssim\|b\|_{\dot{B}_{p,1}^{\frac{N}{p}}}^2\Big(\|b\|_{\dot{B}_{2,\infty}^{-\sigma_1}}^\ell
+\|b\|_{\dot{B}_{p,1}^{\frac{N}{p}}}^h\Big).
\end{split}
\ee

\underline{Estimate of $\frac{1}{1+b}(2\widetilde{\mu}(b){\rm div}D(\bu)+\widetilde{\lambda}(b)\nabla{\rm div}\bu)$}.
The estimate of this term could be similarly handled  as the term $I(b)\mathcal{A}\bu$ and the details are omitted here.

\underline{Estimate of $\frac{1}{1+b}(2\widetilde{\mu}^\prime(b)D(\bu)\cdot\nabla b+\widetilde{\lambda}^\prime(b){\rm div}\bu\nabla b)$}. We only deal with the term $\frac{2\widetilde{\mu}^\prime(b)}{1+b}D(\bu)\cdot\nabla b$ and the remainder term could be handled in a similar manner. Denote by $J(b)$ the smooth function fulfilling
\be\label{4.1000}
J^\prime(b)=\frac{2\widetilde{\mu}^\prime(b)}{1+b}
\,\,\,{\rm and}\,\,\, J(0)=0, \,\,\,{\rm so\,\, that}\,\,\, \nabla J(b)=\frac{2\widetilde{\mu}^\prime(b)}{1+b}\nabla b.
\ee
Decomposing $J(b)=J^\prime(0)b+\bar{J}(b)b$ implies
$\nabla J(b)=J^\prime(0)\nabla b+\nabla (\bar{J}(b)b)$. Then we have from \eqref{4.1} and \eqref{4.2} that
\be\label{4.40}
\|\nabla b^\ell D(\bu)^\ell\|_{\dot{B}_{2,\infty}^{-\sigma_1}}\lesssim\|\nabla b^\ell\|_{\dot{B}_{p,1}^{\frac{N}{p}}}
\|D(\bu)^\ell\|_{\dot{B}_{2,\infty}^{-\sigma_1}}\lesssim\|b\|_{\dot{B}_{2,1}^{\frac{N}{2}+1}}^\ell
\|\bu\|_{\dot{B}_{2,\infty}^{-\sigma_1}}^\ell,
\ee
\be\label{4.41}
\|\nabla b^\ell D(\bu)^h\|_{\dot{B}_{2,\infty}^{-\sigma_1}}\lesssim\|D(\bu)^h\|_{\dot{B}_{p,1}^{\frac{N}{p}}}
\|\nabla b^\ell\|_{\dot{B}_{2,\infty}^{-\sigma_1}}\lesssim\|\bu\|_{\dot{B}_{p,1}^{\frac{N}{p}+1}}^h
\|b\|_{\dot{B}_{2,\infty}^{-\sigma_1}}^\ell,
\ee
and
\be\label{4.42}
\begin{split}
\|\nabla b^h D(\bu)\|_{\dot{B}_{2,\infty}^{-\sigma_1}}^\ell&\lesssim\|\nabla b^h\|_{\dot{B}_{p,1}^{\frac{N}{p}-1}}
\Big(\|D(\bu)\|_{\dot{B}_{p,\infty}^{-\sigma_1+\frac{N}{p}-\frac{N}{2}+1}}
+\|D(\bu)\|_{\dot{B}_{p,\infty}^{-\sigma_1+\frac{2N}{p}-N+1}}\Big)\\[1ex]
&\lesssim\|b^h\|_{\dot{B}_{p,1}^{\frac{N}{p}}}
\Big(\|\bu\|_{\dot{B}_{p,\infty}^{-\sigma_1+\frac{N}{p}-\frac{N}{2}+2}}^h
+\|\bu\|_{\dot{B}_{p,\infty}^{-\sigma_1+\frac{2N}{p}-N+2}}^\ell\Big)\\[1ex]
&\lesssim\|b\|_{\dot{B}_{p,1}^{\frac{N}{p}}}^h\Big(\|\bu\|_{\dot{B}_{p,1}^{\frac{N}{p}+1}}^h
+\|\bu\|_{\dot{B}_{2,\infty}^{-\sigma_1}}^\ell
\Big).
\end{split}
\ee
In addition, the remaining term with $\bar{J}(a)a$ may be estimated as
\be\label{4.400}
\begin{split}
\|\nabla (\bar{J}(b)b) D(\bu)\|_{\dot{B}_{2,\infty}^{-\sigma_1}}^\ell&\lesssim\|\bar{J}(b)b\|_{\dot{B}_{p,1}^{\frac{N}{p}}}
\Big(\|D(\bu)\|_{\dot{B}_{p,\infty}^{-\sigma_1+\frac{N}{p}-\frac{N}{2}+1}}
+\|D(\bu)\|_{\dot{B}_{p,\infty}^{-\sigma_1+\frac{2N}{p}-N+1}}\Big)\\[1ex]
&\lesssim\|b\|_{\dot{B}_{p,1}^{\frac{N}{p}}}^2
\Big(\|\bu\|_{\dot{B}_{p,\infty}^{-\sigma_1+\frac{N}{p}-\frac{N}{2}+2}}^h
+\|\bu\|_{\dot{B}_{p,\infty}^{-\sigma_1+\frac{2N}{p}-N+2}}^\ell\Big)\\[1ex]
&\lesssim\|b\|_{\dot{B}_{p,1}^{\frac{N}{p}}}^2
\Big(\|\bu\|_{\dot{B}_{p,1}^{\frac{N}{p}+1}}^h+\|\bu\|_{\dot{B}_{2,\infty}^{-\sigma_1}}^\ell
\Big).
\end{split}
\ee
Plugging all  estimates above in \eqref{3.1006}, we end up with the proof of Lemma \ref{le3}.
\end{proof}

By the definition of $\mathcal{X}_p(t)$ in Theorem \ref{th1.1}, one has
\be\nonumber
\begin{split}
\|(b, \bH, \bu)\|_{L_t^2(\dot{B}_{p,1}^{\frac{N}{p}})}^\ell&\lesssim\|(b, \bH, \bu)\|_{L_t^2(\dot{B}_{2,1}^{\frac{N}{2}})}^\ell\lesssim\Big(\|(b, \bH, \bu)\|_{L_t^\infty(\dot{B}_{2,1}^{\frac{N}{2}-1})}^\ell\Big)^{\frac{1}{2}}\Big(\|(b, \bH, \bu)\|_{L_t^1(\dot{B}_{2,1}^{\frac{N}{2}+1})}^\ell\Big)^{\frac{1}{2}},
\end{split}
\ee
\be\nonumber
\|(b,\bH)\|_{L_t^2(\dot{B}_{p,1}^{\frac{N}{p}})}^h\lesssim\Big(\|(b,\bH)\|_{L_t^\infty(\dot{B}_{p,1}^{\frac{N}{p}})}^h\Big)^{\frac{1}{2}}
\Big(\|(b,\bH)\|_{L_t^1(\dot{B}_{p,1}^{\frac{N}{p}})}^h\Big)^{\frac{1}{2}},
\ee
and
\be\nonumber
\|\bu\|_{L_t^2(\dot{B}_{p,1}^{\frac{N}{p}})}^h\lesssim\Big(\|\bu\|_{L_t^\infty(\dot{B}_{p,1}^{\frac{N}{p}-1})}^h\Big)^{\frac{1}{2}}
\Big(\|\bu\|_{L_t^1(\dot{B}_{p,1}^{\frac{N}{p}+1})}^h\Big)^{\frac{1}{2}}.
\ee
On the other hand, it follows that
\be\nonumber
\|b\|_{L_t^\infty(\dot{B}_{p,1}^{\frac{N}{p}})}\lesssim\|b\|_{L_t^\infty(\dot{B}_{p,1}^{\frac{N}{p}})}^\ell
+\|b\|_{L_t^\infty(\dot{B}_{p,1}^{\frac{N}{p}})}^h\lesssim\|b\|_{L_t^\infty(\dot{B}_{2,1}^{\frac{N}{2}-1})}^\ell
+\|b\|_{L_t^\infty(\dot{B}_{p,1}^{\frac{N}{p}})}^h.
\ee
Then, we have
\be\label{4.73}
\int_0^t(A_1(\tau)+A_2(\tau))d\tau\leq \mathcal{X}_p+\mathcal{X}_p^2+\mathcal{X}_p^3\leq C\mathcal{X}_{p,0},
\ee
which yields from Gronwall's inequality that
\be\label{4.74}
\|(b,\bH,\bu)\|_{\dot{B}_{2,\infty}^{-\sigma_1}}^\ell\leq C_0
\ee
for all $t\geq 0$, where $C_0>0$ depends on $\|(b_0, \bH_0, \bu_0)\|_{\dot{B}_{2,\infty}^{-\sigma_1}}^\ell$ and $\mathcal{X}_{p,0}$.
\section{Proofs of main results}\label{s:5}
This section is devoted to proving Theorem \ref{th2} and Corollary \ref{col1}.
\subsection{Proof of Theorem \ref{th2}}
From Theorem \ref{th1.1}, we have
\begin{equation}\label{5.1}
\begin{split}
&\quad\|(b,\bH,\bu)(t)\|_{\dot{B}_{2,1}^{\frac{N}{2}-1}}^\ell+\|b(t)\|_{\dot{B}_{p,1}^{\frac{N}{p}}}^h
+\|\bH(t)\|_{\dot{B}_{p,1}^{\frac{N}{p}}}^h+\|\bu(t)\|_{\dot{B}_{p,1}^{\frac{N}{p}-1}}^h\\[1ex]
&+\int_0^t\Big(\|(b,\bH,\bu)(\tau)\|_{\dot{B}_{2,1}^{\frac{N}{2}+1}}^\ell
+\|b(\tau)\|_{\dot{B}_{p,1}^{\frac{N}{p}}}^h+\|\bH(\tau)\|_{\dot{B}_{p,1}^{\frac{N}{p}}}^h
+\|\bu(\tau)\|_{\dot{B}_{p,1}^{\frac{N}{p}+1}}^h\Big)d\tau\lesssim\mathcal{X}_{p,0}.
\end{split}
\end{equation}

In what follows, we will employ  the following interpolation inequalities:
\begin{prop}\label{pra.7}(\cite{xin2018optimal})
Suppose that $m\neq \rho$. Then it holds that
$$
\|f\|_{\dot{B}_{p,1}^j}^\ell\lesssim (\|f\|_{\dot{B}_{r,\infty}^m}^\ell)^{1-\eta}(\|f\|_{\dot{B}_{r,\infty}^{\rho}}^\ell)^{\eta}
\,\,\,{\rm and}\,\,\,
\|f\|_{\dot{B}_{p,1}^j}^h\lesssim (\|f\|_{\dot{B}_{r,\infty}^m}^h)^{1-\eta}(\|f\|_{\dot{B}_{r,\infty}^{\rho}}^h)^{\eta},
$$
where $j+N(\frac{1}{r}-\frac{1}{p})=m(1-\eta)+\rho\eta$ for $0<\eta<1$ and $1\leq r\leq p\leq \infty$.
\end{prop}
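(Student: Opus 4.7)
The plan is to deduce both inequalities from Bernstein's inequality (Lemma~\ref{le2.1}) together with an optimal cutoff in the dyadic decomposition. I will describe the argument for the low-frequency inequality; the high-frequency case follows by a mirror-image splitting that I comment on at the end.

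Expanding the definition of the low-frequency $\dot{B}_{p,1}^j$ norm and applying Bernstein to each dyadic block (which is legal since $r\leq p$), I would obtain
\begin{equation*}
\|f\|_{\dot{B}_{p,1}^j}^\ell
= \sum_{k\leq k_0}2^{kj}\|\dot{\Delta}_k f\|_{L^p}
\lesssim \sum_{k\leq k_0}2^{ks}\|\dot{\Delta}_k f\|_{L^r},
\end{equation*}
where $s=j+N(\tfrac{1}{r}-\tfrac{1}{p})$. By the parameter relation in the statement one has $s=m(1-\eta)+\rho\eta$, and since $0<\eta<1$ together with $m\neq\rho$, the index $s$ lies strictly between $m$ and $\rho$. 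For definiteness assume $m<\rho$, so that $m<s<\rho$; the reverse case is symmetric.

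Next, I would use the two elementary pointwise estimates
\begin{equation*}
\|\dot{\Delta}_k f\|_{L^r}\leq 2^{-km}\|f\|_{\dot{B}_{r,\infty}^m}^\ell
\qquad\text{and}\qquad
\|\dot{\Delta}_k f\|_{L^r}\leq 2^{-k\rho}\|f\|_{\dot{B}_{r,\infty}^\rho}^\ell
\end{equation*}
coming directly from the definition of the $\dot{B}_{r,\infty}^t$-norm, and split the dyadic sum at some integer $K\leq k_0$ to be chosen. Using the first bound on $\{k\leq K\}$ makes the weight $2^{k(s-m)}$ summable at $-\infty$ since $s-m>0$, while using the second bound on $\{K<k\leq k_0\}$ makes the weight $2^{k(s-\rho)}$ geometrically decreasing since $s-\rho<0$. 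Summing both geometric series yields
\begin{equation*}
\sum_{k\leq k_0}2^{ks}\|\dot{\Delta}_k f\|_{L^r}
\lesssim 2^{K(s-m)}\|f\|_{\dot{B}_{r,\infty}^m}^\ell
     + 2^{K(s-\rho)}\|f\|_{\dot{B}_{r,\infty}^\rho}^\ell.
\end{equation*}

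Finally, I would balance the two terms by choosing $K$ so that $2^{K(\rho-m)}\approx\|f\|_{\dot{B}_{r,\infty}^\rho}^\ell/\|f\|_{\dot{B}_{r,\infty}^m}^\ell$. Plugging back, each term collapses to $(\|f\|_{\dot{B}_{r,\infty}^m}^\ell)^{(\rho-s)/(\rho-m)}(\|f\|_{\dot{B}_{r,\infty}^\rho}^\ell)^{(s-m)/(\rho-m)}$, and the parameter identity $(s-m)/(\rho-m)=\eta$ produces exactly the asserted interpolation. The main technical point to verify is that this optimizer can be taken $\leq k_0$, and that is automatic: for every $k\leq k_0$ one has $2^{k\rho}\|\dot{\Delta}_k f\|_{L^r}\leq 2^{k_0(\rho-m)}\,2^{km}\|\dot{\Delta}_k f\|_{L^r}$, so that $\|f\|_{\dot{B}_{r,\infty}^\rho}^\ell\leq 2^{k_0(\rho-m)}\|f\|_{\dot{B}_{r,\infty}^m}^\ell$, which forces the optimal $K$ into the admissible range (if the heuristic optimum happens to sit at the boundary, one just uses the endpoint $K=k_0$ and the bound still holds). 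The high-frequency inequality is proved identically by the mirror splitting $\sum_{k>k_0}=\sum_{k_0<k\leq K}+\sum_{k>K}$ for some $K\geq k_0$; the complementary comparison $\|f\|_{\dot{B}_{r,\infty}^\rho}^h\geq 2^{k_0(\rho-m)}\|f\|_{\dot{B}_{r,\infty}^m}^h$ guarantees the corresponding $K$ is again in the right range, so no essentially new argument is required.
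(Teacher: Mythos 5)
Your proof is correct. The paper itself gives no argument for Proposition \ref{pra.7} --- it is imported verbatim from the cited reference --- so there is nothing internal to compare against; your dyadic-splitting argument (Bernstein to pass from $L^p$ to $L^r$ blocks, the two trivial bounds $\|\dot{\Delta}_k f\|_{L^r}\leq 2^{-km}\|f\|_{\dot{B}_{r,\infty}^m}^\ell$ and $\|\dot{\Delta}_k f\|_{L^r}\leq 2^{-k\rho}\|f\|_{\dot{B}_{r,\infty}^\rho}^\ell$, a cutoff $K$ optimized so that the two geometric tails balance) is the standard proof of such real-interpolation inequalities, and your verification that the optimal $K$ automatically lands in $\{k\leq k_0\}$ (resp.\ $\{k\geq k_0\}$) via the a priori comparison $\|f\|_{\dot{B}_{r,\infty}^\rho}^\ell\leq 2^{k_0(\rho-m)}\|f\|_{\dot{B}_{r,\infty}^m}^\ell$ is exactly the point that distinguishes the truncated (low/high-frequency) statement from the classical full-line one. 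The only cosmetic remark is that the case $m>\rho$ should be mentioned as handled by swapping the roles of the two bounds, which you do implicitly by declaring the situation symmetric.
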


Due to $-\sigma_1<\frac{N}{2}-1\leq \frac{N}{p}<\frac{N}{2}+1$, it follows from Proposition \ref{pra.7} that
\be\label{5.40}
\|(b,\bH,\bu)\|_{\dot{B}_{2,1}^{\frac{N}{2}-1}}^\ell
\leq C\Big(\|(b,\bH,\bu)\|_{\dot{B}_{2,\infty}^{-\sigma_1}}^\ell\Big)^{\eta_0}
\Big(\|(b,\bH,\bu)\|_{\dot{B}_{2,\infty}^{\frac{N}{2}+1}}^\ell\Big)^{1-\eta_0},
\ee
where $\eta_0=\frac{2}{N/2+1+\sigma_1}\in (0,1)$. In view of \eqref{4.74}, we have
$$
\|(b,\bH,\bu)\|_{\dot{B}_{2,\infty}^{\frac{N}{2}+1}}^\ell\geq c_0\Big(\|(b,\bH,\bu)\|_{\dot{B}_{2,1}^{\frac{N}{2}-1}}^\ell\Big)^{\frac{1}{1-\eta_0}},
$$
where $c_0=C^{-\frac{1}{1-\eta_0}}C_0^{-\frac{\eta_0}{1-\eta_0}}$.

Moreover, it follows from the fact
$\|b\|_{\dot{B}_{p,1}^{\frac{N}{p}}}^h+\|\bH\|_{\dot{B}_{p,1}^{\frac{N}{p}}}^h
+\|\bu\|_{\dot{B}_{p,1}^{\frac{N}{p}-1}}^h\leq \mathcal{X}_p(t)\lesssim \mathcal{X}_{p,0}\ll 1$
for all $t\geq 0$ that
$$
\Big(\|b\|_{\dot{B}_{p,1}^{\frac{N}{p}}}^h\Big)^{\frac{1}{1-\eta_0}}\lesssim\|b\|_{\dot{B}_{p,1}^{\frac{N}{p}}}^h,\,\,\,\,\,
\Big(\|\bH\|_{\dot{B}_{p,1}^{\frac{N}{p}}}^h\Big)^{\frac{1}{1-\eta_0}}\lesssim
\|\bH\|_{\dot{B}_{p,1}^{\frac{N}{p}}}^h\,\,\,\,{\rm and}\,\,\,\Big(\|\bu\|_{\dot{B}_{p,1}^{\frac{N}{p}-1}}^h\Big)^{\frac{1}{1-\eta_0}}\lesssim
\|\bu\|_{\dot{B}_{p,1}^{\frac{N}{p}+1}}^h.
$$
Thus, we have the following Lyapunov-type inequality:
{\small\begin{equation}\label{5.41}
\begin{split}
&\quad\|(b,\bH,\bu)(t)\|_{\dot{B}_{2,1}^{\frac{N}{2}-1}}^\ell+\|b(t)\|_{\dot{B}_{p,1}^{\frac{N}{p}}}^h
+\|\bH(t)\|_{\dot{B}_{p,1}^{\frac{N}{p}}}^h+\|\bu(t)\|_{\dot{B}_{p,1}^{\frac{N}{p}-1}}^h\\[1ex]
&+\int_0^t\Big(\|(b,\bH,\bu)(t)\|_{\dot{B}_{2,1}^{\frac{N}{2}-1}}^\ell+\|b(t)\|_{\dot{B}_{p,1}^{\frac{N}{p}}}^h
+\|\bH(t)\|_{\dot{B}_{p,1}^{\frac{N}{p}}}^h+\|\bu(t)\|_{\dot{B}_{p,1}^{\frac{N}{p}-1}}^h\Big)^{1+\frac{2}{N/2-1+\sigma_1}}d\tau\lesssim\mathcal{X}_{p,0}.
\end{split}
\end{equation}}
Solving \eqref{5.41}  yields
\be\label{5.42}
\begin{split}
&\quad\|(b,\bH,\bu)(t)\|_{\dot{B}_{2,1}^{\frac{N}{2}-1}}^\ell+\|b(t)\|_{\dot{B}_{p,1}^{\frac{N}{p}}}^h
+\|\bH(t)\|_{\dot{B}_{p,1}^{\frac{N}{p}}}^h+\|\bu(t)\|_{\dot{B}_{p,1}^{\frac{N}{p}-1}}^h\\[1ex]
&\lesssim\Big(\mathcal{X}_{p,0}^{-\frac{2}{N/2-1+\sigma_1}}
+\frac{2t}{N/2-1+\sigma_1}\Big)^{-\frac{N/2-1+\sigma_1}{2}}
\lesssim (1+t)^{-\frac{N/2-1+\sigma_1}{2}}
\end{split}
\ee
for all $t\geq 0$.
Resorting to the embedding properties in Proposition \ref{pr2.1}, we arrive at
\be\label{5.43}
\begin{split}
&\quad\|(b, \bH, \bu)(t)\|_{\dot{B}_{p,1}^{\frac{N}{p}-1}}\\[1ex]
&\lesssim\|(b, \bH, \bu)(t)\|_{\dot{B}_{2,1}^{\frac{N}{2}-1}}^\ell+\|(b, \bH)\|_{\dot{B}_{p,1}^{\frac{N}{p}}}^h+\|\bu(t)\|_{\dot{B}_{p,1}^{\frac{N}{p}-1}}^h
\lesssim (1+t)^{-\frac{N/2-1+\sigma_1}{2}}.
\end{split}
\ee
In addition, employing Proposition
\ref{pra.7} again yields  for $\sigma\in (-\sigma_1-\frac{N}{2}+\frac{N}{p}, \frac{N}{p}-1)$ that
\be\label{5.44}
\begin{split}
\|(b, \bH, \bu)(t)\|_{\dot{B}_{p,1}^{\sigma}}^\ell&\lesssim\|(b, \bH, \bu)(t)\|_{\dot{B}_{2,1}^{\sigma+N(\frac{1}{2}-\frac{1}{p})}}^\ell\\[1ex]
&\lesssim\Big(\|(b, \bH, \bu)\|_{\dot{B}_{2,\infty}^{-\sigma_1}}^\ell\Big)^{\eta_1}
\Big(\|(b, \bH, \bu)\|_{\dot{B}_{2,\infty}^{\frac{N}{2}-1}}^\ell\Big)^{1-\eta_1},
\end{split}
\ee
where
$$
\eta_1=\frac{\frac{N}{p}-1-\sigma}{\frac{N}{2}-1+\sigma_1}\in(0,1).
$$
Note that
$
\|(b, \bH, \bu)\|_{\dot{B}_{2,\infty}^{-\sigma_1}}^\ell\leq C_0
$
for all $t\geq 0$.  From \eqref{5.42} and \eqref{5.44}, we deduce that
\be\label{5.45}
\begin{split}
\|(b, \bH, \bu)(t)\|_{\dot{B}_{p,1}^{\sigma}}^\ell\lesssim
\Big[(1+t)^{-\frac{N/2-1+\sigma_1}{2}}\Big]^{1-\eta_1}
=(1+t)^{-\frac{N}{2}(\frac{1}{2}-\frac{1}{p})-\frac{\sigma+\sigma_1}{2}}
\end{split}
\ee
for all $t\geq 0$, which leads to
\be\label{5.46}
\begin{split}
\|(b, \bH, \bu)(t)\|_{\dot{B}_{p,1}^{\sigma}}\lesssim
\|(b, \bH, \bu)(t)\|_{\dot{B}_{p,1}^{\sigma}}^\ell+\|(b, \bH, \bu)(t)\|_{\dot{B}_{p,1}^{\sigma}}^h
\lesssim (1+t)^{-\frac{N}{2}(\frac{1}{2}-\frac{1}{p})-\frac{\sigma+\sigma_1}{2}}
\end{split}
\ee
provided that $\sigma\in (-\sigma_1-N(\frac{1}{2}-\frac{1}{p}), \frac{N}{p}-1)$. This together with \eqref{5.43} yields \eqref{1}.
So far, the proof of Theorem \ref{th2} is completed.
\subsection{Proof of Corollary \ref{col1}} In fact, Corollary \ref{col1} can be regarded as the direct consequence of the following interpolation inequality:
\begin{prop}\label{pra.8}(\cite{bahouri2011fourier})
The following interpolation inequality holds true:
$$
\|\Lambda^lf\|_{L^r}\lesssim \|\Lambda^mf\|_{L^q}^{1-\eta}\|\Lambda^kf\|_{L^q}^\eta,
$$
whenever $0\leq \eta\leq 1, 1\leq q\leq r\leq \infty$ and
$$
l+N\Big(\frac{1}{q}-\frac{1}{r}\Big)=m(1-\eta)+k\eta.
$$
\end{prop}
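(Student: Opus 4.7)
The plan is to reduce to a frequency-by-frequency estimate via the homogeneous Littlewood-Paley decomposition and then optimize a two-sided Bernstein bound. First I would write $f=\sum_{j\in\mathbb{Z}}\dot{\Delta}_j f$ in $\mathcal{Y}'(\mathbb{R}^N)$ and start from the crude triangle-type inequality $\|\Lambda^l f\|_{L^r}\leq\sum_j \|\Lambda^l\dot{\Delta}_j f\|_{L^r}$.

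Next, since $\dot{\Delta}_j f$ has Fourier support in an annulus of size $2^j$, two applications of Bernstein's inequality (Lemma \ref{le2.1}) give, on the one hand, the smoothing/embedding bound $\|\Lambda^l\dot{\Delta}_j f\|_{L^r}\lesssim 2^{j(l+N(1/q-1/r))}\|\dot{\Delta}_j f\|_{L^q}$, and on the other hand the reverse-type bound $\|\dot{\Delta}_j f\|_{L^q}\lesssim 2^{-js}\|\Lambda^s f\|_{L^q}$ valid for any real $s$. Substituting this with $s=m$ and $s=k$ in turn, and invoking the scaling identity $l+N(1/q-1/r)=m(1-\eta)+k\eta$, I obtain the two complementary estimates $\|\Lambda^l\dot{\Delta}_j f\|_{L^r}\lesssim 2^{j(k-m)\eta}\|\Lambda^m f\|_{L^q}$ and $\|\Lambda^l\dot{\Delta}_j f\|_{L^r}\lesssim 2^{-j(k-m)(1-\eta)}\|\Lambda^k f\|_{L^q}$.

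Assuming without loss of generality that $k>m$ (else swap $\eta\leftrightarrow 1-\eta$ and $m\leftrightarrow k$), the first estimate is summable for $j$ below a threshold $j_0$ and the second for $j$ above it; the two resulting geometric series collapse to give $\|\Lambda^l f\|_{L^r}\lesssim 2^{j_0(k-m)\eta}\|\Lambda^m f\|_{L^q}+2^{-j_0(k-m)(1-\eta)}\|\Lambda^k f\|_{L^q}$. Choosing $j_0\in\mathbb{Z}$ so that $2^{j_0(k-m)}\approx \|\Lambda^k f\|_{L^q}/\|\Lambda^m f\|_{L^q}$ (rounding to the nearest integer costs only a universal constant) balances the two terms and yields exactly the claimed bound $\|\Lambda^l f\|_{L^r}\lesssim\|\Lambda^m f\|_{L^q}^{1-\eta}\|\Lambda^k f\|_{L^q}^{\eta}$. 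The degenerate cases $\eta\in\{0,1\}$ or $m=k$ reduce to a single Bernstein application with no optimization needed.

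The main obstacle is formal rather than computational: one must verify that the dyadic series $\sum_j\dot{\Delta}_j f$ converges in a sense compatible with $L^r$-control of $\Lambda^l f$, i.e., that the polynomial ambiguity inherent in interpreting $f\in\mathcal{Y}'(\mathbb{R}^N)$ does not spoil the $L^r$ bound. Once the a priori finiteness of $\|\Lambda^m f\|_{L^q}$ and $\|\Lambda^k f\|_{L^q}$ is assumed (which forces $f$ to lie in the appropriate realization), this convergence issue is handled by the standard theory recalled in Section \ref{s:3}, and the rest of the argument is a clean optimization.
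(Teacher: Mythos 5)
The paper does not prove this proposition at all: it is quoted verbatim from \cite{bahouri2011fourier}, so there is no internal argument to compare against. Your self-contained derivation is the standard proof of this refined Gagliardo--Nirenberg inequality and it is correct in the main case $0<\eta<1$, $k\neq m$: the two Bernstein-type bounds
$\|\Lambda^l\dot{\Delta}_j f\|_{L^r}\lesssim 2^{j\eta(k-m)}\|\Lambda^m f\|_{L^q}$ and
$\|\Lambda^l\dot{\Delta}_j f\|_{L^r}\lesssim 2^{-j(1-\eta)(k-m)}\|\Lambda^k f\|_{L^q}$
follow exactly as you say from the scaling identity, the two geometric tails converge on either side of $j_0$, and the optimization over $j_0$ produces the multiplicative form. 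One small technical remark: the reverse bound $\|\dot{\Delta}_j f\|_{L^q}\lesssim 2^{-js}\|\Lambda^s f\|_{L^q}$ for non-integer $s$ is not literally covered by Lemma \ref{le2.1} (which is stated for integer derivatives); you need the standard observation that $\dot{\Delta}_j=2^{-js}\psi_j\ast\Lambda^s(\cdot)$ with $\widehat{\psi_j}(\xi)=2^{js}|\xi|^{-s}\varphi(2^{-j}\xi)$ and $\|\psi_j\|_{L^1}$ bounded uniformly in $j$. Your remark on the polynomial ambiguity in $\mathcal{Y}'$ is the right one and is resolved as you indicate.

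The one genuine soft spot is your last sentence. When $\eta\in\{0,1\}$ (or $k=m$) and $q<r$, the claim degenerates to the homogeneous Sobolev embedding $\|\Lambda^l f\|_{L^r}\lesssim\|\Lambda^m f\|_{L^q}$, and this does \emph{not} follow from ``a single Bernstein application'': termwise Bernstein gives $\|\Lambda^l\dot{\Delta}_j f\|_{L^r}\lesssim\|\Lambda^m\dot{\Delta}_j f\|_{L^q}$ with no decay in $j$, so the sum over $j$ diverges. That endpoint is a separate theorem (true for $1<q<r<\infty$, and actually false for $r=\infty$ with $l=0$, $m=N/q$), which is why \cite{bahouri2011fourier} states the interpolation inequality with $0<\eta<1$. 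Since the paper later uses $\eta_2\in[0,1)$, the case $\eta_2=0$ does occur there, but it corresponds to the non-endpoint Sobolev embedding already contained in Proposition \ref{pr2.1}; for the purposes of proving the proposition as you set out to do, you should either restrict to $0<\eta<1$ or supply the Sobolev-embedding endpoint by a different argument.
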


 With the aid of  Proposition \ref{pra.8}, we define $\eta_2$ by the relation
$$
m(1-\eta_2)+k\eta_2=l+N\Big(\frac{1}{p}-\frac{1}{r}\Big),
$$
where $m=\frac{N}{p}-1$ and $k=-\sigma_1-N(\frac{1}{2}-\frac{1}{p})+\varepsilon$ with $\varepsilon>0$ small enough. When $l\in\mathbb{R}$ satisfying $-\sigma_1
-\frac{N}{2}+\frac{N}{p}<l+\frac{N}{p}-\frac{N}{r}\leq \frac{N}{p}-1$, it is easy to see that $\eta_2\in [0,1)$. As a consequence, we conclude  by $\dot{B}_{p,1}^0\hookrightarrow L^p$ that
\be\nonumber
\begin{split}
&\quad\|\Lambda^l(b, \bH, \bu)\|_{L^r}\lesssim
\|\Lambda^m(b, \bH, \bu)\|_{L^p}^{1-\eta_2}\|\Lambda^k(b, \bH, \bu)\|_{L^p}^{\eta_2}\\[1ex]
&\lesssim \left[(1+t)^{-\frac{N}{2}(\frac{1}{2}-\frac{1}{p})-\frac{m+\sigma_1}{2}}\right]^{1-\eta_2}
\left[(1+t)^{-\frac{N}{2}(\frac{1}{2}-\frac{1}{p})-\frac{k+\sigma_1}{2}}\right]^{\eta_2}
=(1+t)^{-\frac{N}{2}(\frac{1}{2}-\frac{1}{r})-\frac{l+\sigma_1}{2}}.
\end{split}
\ee
Thus, we finish the proof of Corollary \ref{col1}.
\providecommand{\href}[2]{#2}
\providecommand{\arxiv}[1]{\href{http://arxiv.org/abs/#1}{arXiv:#1}}
\providecommand{\url}[1]{\texttt{#1}}
\providecommand{\urlprefix}{URL }

\end{document}